\numberwithin{equation}{section}
\theoremstyle{plain}
\newtheorem{thm}{Theorem}[section]
\newtheorem{lemma}[thm]{Lemma}
\newtheorem{prop}[thm]{Proposition}
\newtheorem{coroll}[thm]{Corollary}
\newtheorem{claim}[thm]{Claim}
\newtheorem{fact}[thm]{Fact}
\theoremstyle{definition}
\newtheorem{defn}[thm]{Definition}
\newtheorem{remark}[thm]{Remark}
\newtheorem{ex}[thm]{Example}
\newcommand{\R}{\mathbb{R}}
\newcommand{\Z}{\mathbb{Z}}
\DeclareMathOperator{\conv}{Conv}
\def\Conv{{\rm Conv}}
\def\Vis{\rm Vis}
\def\ps@pprintTitle{%
  \let\@oddhead\@empty
  \let\@evenhead\@empty
  \def\@oddfoot{\reset@font\hfil\thepage\hfil}
  \let\@evenfoot\@oddfoot
}
\title{A geometric proof for the root-independence of the greedoid polynomial of Eulerian branching greedoids}
\author{Lilla T\'othm\'er\'esz}
\address{ELTE Eötvös Loránd University, HUN-REN–ELTE Egerváry Research Group on Combinatorial Optimization, P\'azm\'any P\'eter s\'et\'any 1/C, Budapest, Hungary}
\ead{tmlilla@caesar.elte.hu}
\begin{document}

\begin{abstract}
	We define the root polytope of a regular oriented matroid, and show that the greedoid polynomial of an Eulerian branching greedoid rooted at vertex $v_0$ is equivalent to the $h^*$-polynomial of the root polytope of the dual of the graphic matroid.
	
	As the definition of the root polytope is independent of the vertex $v_0$, this gives a geometric proof for the root-independence of the greedoid polynomial for Eulerian branching greedoids, a fact which was first proved by 
	Swee Hong Chan, Kévin Perrot and Trung Van Pham using sandpile models. We also obtain that the greedoid polynomial does not change if we reverse every edge of an Eulerian digraph.
\end{abstract}

\begin{keyword}
	root polytope \sep triangulation \sep regular matroid \sep greedoid 
	\MSC[2020] 05C31 \sep 52B20 \sep 52B40
\end{keyword}

\maketitle

\section{Introduction}

It is a well-known fact that for a connected Eulerian digraph, the number of spanning arborescences rooted at a given vertex $v_0$ is independent of the choice of $v_0$. The branching greedoid of a (connected) Eulerian digraph rooted at $v_0$ is a set system where the maximal sets (bases) are the spanning arborescences rooted at $v_0$. The greedoid polynomial $\lambda_{G,v_0}(x)$ is a polynomial invariant of the greedoid, with $\lambda_{G,v_0}(1)$ equal to the number of spanning arborescences rooted at $v_0$.
Kévin Perrot and Trung Van Pham \cite{PP16} and Swee Hong Chan \cite{SweeHong_parking} showed that for an Eulerian digraph $G$, the polynomial $\lambda_{G,v_0}(x)$ does not depend on the choice of the root $v_0$. 
(Perrot and Pham do not talk about the greedoid polynomial, but show the root-independence of a polynomial for Eulerian digraphs. Chan points out that this polynomial is in fact the greedoid polynomial of the branching greedoid, and furthermore, gives a root-independent definition for the polynomial using the sinkless sandpile model.)
Here we give an alternative, geometric proof for the root-independence of $\lambda_{G,v_0}(x)$, by expressing it using the $h^*$-polynomial of a polytope depending only on $G$, and not on $v_0$. This geometric proof also implies that for the Eulerian digraph $\overleftarrow{G}$ obtained by reversing each edge of $G$, the greedoid polynomial $\lambda_{\overleftarrow{G},v_0}(x)$ agrees with $\lambda_{G,v_0}(x)$. 

Our method is the following: We define the root polytope of regular oriented matroids, and show that for the (oriented) dual $M^*$ of the graphical matroid of $G$, the greedoid polynomial $\lambda_{G,v_0}(x)$ is a simple transformation of the $h^*$-polynomial of the root polytope 
of $M^*$. 
The root polytope of $M^*$ does not depend on the choice of $v_0$, hence we obtain the root-independence of $\lambda_{G,v_0}$. Moreover, the reversal of each edge of $G$ does not change the oriented matroid structure, hence also $\lambda_{G,v_0}(x)=\lambda_{\overleftarrow{G},v_0}(x)$.

More precisely, we define the root polytope of a regular oriented matroid using a totally unimodular representing matrix $A$, but it turns out that the important geometric properties are all independent of the representing matrix. Let us state our main theorem explicitly.
\begin{thm}\label{thm:greedoid_poly_as_h^*}
	Let $G=(V,E)$ be an Eulerian digraph, and $v_0$ a fixed vertex of $G$. Let $\lambda_{G,v_0}$ be the greedoid polynomial of the directed branching greedoid of $G$ with root $v_0$. Let $A$ be any totally unimodular matrix representing the oriented dual of the graphic matroid of $G$, and let $\mathcal{Q}_A$ be the root polytope of $A$. Then 
	$$\lambda_{G,v_0}\left(t\right) = t^{|V|-1} h^*_{\mathcal{Q}_A}\left(t^{-1}\right).$$
\end{thm}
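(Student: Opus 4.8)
\emph{Strategy.} The plan is to compute both sides as the same generating function over the $v_0$-arborescences of $G$, graded by an activity, and then compare them under $t\mapsto t^{-1}$ together with the shift by $t^{|V|-1}$. On the greedoid side I would use the activity expansion of the greedoid polynomial of a branching greedoid (Gordon--McMahon): after fixing a linear order on $E$, $\lambda_{G,v_0}(t)=\sum_{T}t^{\iota(T)}$, where $T$ ranges over the $v_0$-arborescences (the bases of the greedoid) and $\iota(T)$ is the number of internally active edges of $T$, so $0\le\iota(T)\le|V|-1$. On the polytope side I would use that $A$ is totally unimodular, so $\mathcal{Q}_A$ has a unimodular triangulation, and that for any shellable unimodular triangulation $\mathcal{T}$ with shelling order $\sigma_1,\dots,\sigma_s$ one has $h^*_{\mathcal{Q}_A}(t)=\sum_{i=1}^{s}t^{|\mathcal{R}(\sigma_i)|}$, where $\mathcal{R}(\sigma_i)$ is the restriction face of $\sigma_i$. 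It then suffices to produce a bijection $T\mapsto\sigma_T$ between the $v_0$-arborescences and the maximal cells of a well-chosen $\mathcal{T}$ with $|\mathcal{R}(\sigma_T)|=|V|-1-\iota(T)$, since that gives $h^*_{\mathcal{Q}_A}(t)=\sum_{T}t^{|V|-1-\iota(T)}$ and hence $t^{|V|-1}h^*_{\mathcal{Q}_A}(t^{-1})=\sum_{T}t^{\iota(T)}=\lambda_{G,v_0}(t)$.

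\emph{The triangulation and the bijection.} I would take $\mathcal{T}$ to be a regular --- hence shellable --- unimodular triangulation of $\mathcal{Q}_A$ whose vertices are lattice points naturally indexed by $E$ (the columns $a_e$, possibly together with $0$), chosen compatibly with the fixed order on $E$ (a pulling triangulation, or one coming from a generic order-monotone weight). Since $A$ represents the dual $M^*$ of the graphic matroid of $G$ and its columns are indexed by $E$, each maximal cell of $\mathcal{T}$ corresponds to a spanning tree $T$ of $G$ (the complement of the $M^*$-basis it spans), and the number of maximal cells equals $\operatorname{vol}_{\Z}\mathcal{Q}_A=h^*_{\mathcal{Q}_A}(1)$, which the $t=1$ evaluation forces to equal $\lambda_{G,v_0}(1)$, the number of $v_0$-arborescences. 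Using that $G$ is Eulerian --- equivalently strongly connected, which is also exactly what makes the simultaneous reversal of all edges of $G$ act on $\mathcal{Q}_A$ as the reflection $A\mapsto-A$, so that $\lambda_{\overleftarrow{G},v_0}=\lambda_{G,v_0}$ comes for free --- I would show that for this $\mathcal{T}$ the spanning trees occurring as cell-indices are exactly the $v_0$-arborescences, yielding the bijection $T\mapsto\sigma_T$. Since $\mathcal{Q}_A$ does not involve $v_0$, this already contains the root-independence of $\lambda_{G,v_0}$.

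\emph{Matching the gradings.} The heart of the argument is the identity $|\mathcal{R}(\sigma_T)|=|V|-1-\iota(T)$, i.e.\ that the shelling statistic of $\sigma_T$ counts precisely the internally \emph{inactive} edges of $T$. I would prove this by comparing adjacent maximal cells of $\mathcal{T}$: two cells sharing a facet differ by a single pivot, which under the dictionary ``cell $\leftrightarrow$ arborescence'' is an edge exchange between the two $v_0$-arborescences, governed by a fundamental cocircuit of $G$. Tracking, for $e\in T$, whether the facet of $\sigma_T$ across which $e$ would be exchanged lies in an \emph{earlier} cell of the shelling, I would identify ``facet new to the shelling'' with ``$e$ internally inactive'', using that the shelling order is the one induced by the fixed order on $E$. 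Summing over $T$ yields $h^*_{\mathcal{Q}_A}(t)=\sum_{T}t^{|V|-1-\iota(T)}$, and the theorem follows as in the first paragraph.

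\emph{Main obstacle.} I expect this last matching to be the real difficulty: greedoid internal activity is an intrinsically combinatorial notion on the branching greedoid, whereas $|\mathcal{R}(\sigma_T)|$ lives on a shelled triangulation of $\mathcal{Q}_A$, and bridging the two needs a precise dictionary between geometric pivots in $\mathcal{T}$ and edge exchanges in $G$ together with a verification that the fixed order genuinely induces a shelling with the claimed restriction faces. The Eulerian hypothesis is indispensable here --- not for the volume count, but to keep every relevant exchange within the family of $v_0$-arborescences --- and it is what ultimately forces $v_0$-independence. A small additional point, which the same analysis delivers, is that every restriction face has at most $|V|-1$ elements, so $\deg h^*_{\mathcal{Q}_A}\le|V|-1$ and the right-hand side of the theorem is genuinely a polynomial.
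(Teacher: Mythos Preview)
Your overall strategy---build a unimodular triangulation of $\mathcal{Q}_A$ whose maximal cells are indexed by the $v_0$-arborescences, then match an activity statistic---is the right shape, and is close in spirit to the paper's second proof. But the central step is missing: you never actually construct such a triangulation or prove it exists. You assert that a regular pulling triangulation ``chosen compatibly with the fixed order on $E$'' will have its cells indexed exactly by the $v_0$-arborescences, yet this cannot follow from that data alone: $\mathcal{Q}_A$ does not see $v_0$, so a pulling order determined purely by an edge ordering has no reason to single out the arborescences rooted at one particular vertex, and the volume identity $\mathrm{vol}_{\Z}\,\mathcal{Q}_A=\lambda_{G,v_0}(1)$ is a numerical coincidence that does not identify which spanning trees label the cells. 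In the paper this is the substance of Theorem~\ref{thm:arborescenced_triangulate}: one \emph{defines} $\mathcal{T}=\{\Delta_{E\setminus T}:T\text{ a }v_0\text{-arborescence}\}$ and proves, via a burning-order argument on spanning trees (Lemma~\ref{lem:arborescences_dissect}) together with a signed-circuit/separating-hyperplane criterion (Lemmas~\ref{lem:triang_cond} and~\ref{lem:arborescences_meet_in_common_face}), that these simplices really do triangulate $\mathcal{Q}_A$. This is also where the Eulerian hypothesis enters---it makes $M^*$ co-Eulerian, so that every basis of $M^*$ gives a full-dimensional unimodular simplex---rather than merely ``keeping exchanges within the arborescences.'' (Two small side errors: Eulerian is not equivalent to strongly connected, and $0$ is not a vertex of $\mathcal{Q}_A$.)

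A second gap concerns the activity. The greedoid polynomial here is $\lambda(t)=\sum_T t^{e(T)}$ with \emph{external} (semi-)activity; your expansion $\sum_T t^{\iota(T)}$ via internal activity of the arborescence is not the one on record and would itself require proof. The paper matches the gradings not through a shelling but through a visibility formula (Proposition~\ref{prop:h^*-vector_from_visible_facets}) applied to the arborescence triangulation: one chooses an explicit interior point $q$ depending on the edge order and shows that the number of facets of $\Delta_{E\setminus T}$ visible from $q$ equals the internal semi-passivity of $E\setminus T$ in $M^*$, which by duality (Claim~\ref{cl:ext_int_semipassivity_dual}) is the external semi-passivity of $T$ in $G$. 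Your ``pivot $\leftrightarrow$ fundamental cocircuit'' dictionary is morally this same computation, but it only becomes meaningful once the specific arborescence triangulation has been established.
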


We give two different proofs for Theorem \ref{thm:greedoid_poly_as_h^*}.
One of these is based on the definition of the greedoid polynomial as the $h$-vector of the (abstract) dual complex of the greedoid. We show that the complements of spanning arborescences give a unimodular triangulation of the root polytope of the cographic matroid, and this triangulation is a geometric realization of the abstract dual complex, yielding the relationship between the $h$-polynomial and the $h^*$-polynomial. 

Our other proof uses the definition of the greedoid polynomial as a generating function of greedoid activities. For this proof, we generalize the formula from \cite{fixed_edge_order} to get a formula for the $h^*$-polynomial of root polytopes of co-Eulerian regular oriented matroids. Then we show that the activity definition of the greedoid polynomial is a special case of this formula. This second proof also yields further interesting (though, probably not very practical) definitions for the greedoid polynomial.

We note that in the special case of planar Eulerian digraphs (where the dual matroid is also graphic), the relationship of the $h^*$-polynomial of the dual root polytope and the greedoid polynomial was proved in \cite{semibalanced} by Tamás Kálmán and the current author. This paper is the result of an effort to generalize the result of \cite{semibalanced} to all Eulerian digraphs, which needed the introduction of the root polytope for regular oriented matroids.

\section{Preliminaries} \label{sec:prelim}

\subsection{Directed graphs}

Let $G=(V,E)$ be a directed graph. 
A subset $C^*$ of the edges of $G$ is called a \emph{cut} if there is a partition $V=V_1\sqcup V_2$ such that $C^*$ is the set of edges connecting a vertex of $V_1$ and a vertex of $V_2$ (with any orientation). We say that $V_1$ and $V_2$ are the shores of the cut. If each edge of $C^*$ is directed from $V_1$ to $V_2$, or each edge of $C^*$ is directed from $V_2$ to $V_1$, then we say that the cut $C^*$ is directed.

A subgraph of $G$ is called a \emph{tree}, if the underlying unoriented subgraph is connected and cycle-free. (Hence orientations do not play a role in the definition of a tree.) A tree is a spanning tree if it contains each vertex of $G$. For a spanning tree $T$ and edge $e\in T$, the subgraph $T-e$ has two weak connected components. The cut connecting vertices of the two components is called the \emph{fundamental cut} of $e$ with respect to $T$, and is denoted by $C^*(T,e)$.
For an edge $e\notin T$, $T + e$ has exactly one cycle. This cycle is called the fundamental cycle of $e$ with respect to $T$ and is denoted by $C(T,e)$.

Let $v_0$ be a fixed vertex. A tree $F$ of $G$ is called an \emph{arborescence} rooted at $v_0$ if $v_0\in F$, and each edge $e\in F$ is oriented ``away from $v_0$''. That is, in the fundamental cut $C^*(F,e)$, the edge $e$ has its tail in the shore containing $v_0$. A subgraph is called a spanning arborescence, if it is an arborescence, and a spanning tree.

\subsection{Greedoids}

Greedoids were introduced by Korte and Lov\'asz as a generalization of matroids where the greedy algorithm works.
\begin{defn}[greedoid \cite{KorteLovasz}]
	A set system $\mathcal{F}$ on a finite ground set $E$ is called a greedoid, if it satisfies the following axioms
	\begin{itemize}
		\item[(1)] $\emptyset \in \mathcal{F}$,
		\item[(2)] for all $X\in \mathcal{F} - \{\emptyset\}$ there exists $x \in X$ such that $X-x\in \mathcal{F}$,
		\item[(3)] if $X,Y\in \mathcal{F}$ and $|X|=|Y|+1$, then there exist an $x\in X-Y$ such that $Y\cup x \in \mathcal{F}$.
	\end{itemize}
	Elements of $\mathcal{F}$ are called accessible sets, and maximal accessible sets are called bases.
\end{defn}

For example, matroids are a special class of greedoids, but greedoids are able to express connectivity properties that matroids cannot. It follows from the axioms that bases have the same cardinality $r$, which is called the rank of the greedoid.

Here, we will be interested in the class of directed branching greedoids: 
Let $G=(V,E)$ be a directed graph, and let $v_0$ be an arbitrary vertex of $G$.
The \emph{directed branching greedoid with root $v_0$} has groundset $E$, and its accessible sets are the arborescences rooted at $v_0$. (It is easy to check that this is indeed a greedoid.) 
If $G$ is strongly connected (that will always be the case for us), then the bases are the spanning arborescences rooted at $v_0$.

One important invariant of a greedoid is the greedoid polynomial. It was defined by Björner, Korte and Lovász, and they give two equivalent definitions \cite[Theorem 6.1]{greedoid}. One of them is topological, and uses the (abstract) dual complex of the greedoid. Another definition uses activities. We will give two proofs for our main theorem, each based on these two definitions. Hence we repeat here both definitions.

\subsubsection{The definition of the greedoid polynomial via the dual complex}
The \emph{dual complex} of a greedoid is an abstract simplicial complex. By an abstract simplicial complex, we mean a set system $\mathcal{S}$ where $X\in\mathcal{S}$, $Y\subseteq X$ implies $Y\in\mathcal{S}$. Elements $S\in\mathcal{S}$ are called simplices, and $|S|-1$ is called the dimension of $S$. (Hence the dimension of the empty simplex is $-1$.) 

By definition, the dual complex of a greedoid consists of those subsets of $E$ whose complement contains a basis. This is indeed an abstract simplicial complex, and its maximal simplices are the complements of the bases. Hence in particular, the maximal simplices have equal dimension. Such complexes are called \emph{pure}.

The \emph{$h$-polynomial} of a $d$-dimensional simplicial complex $\mathcal{S}$ is defined as
\begin{equation}
\label{eq:h-vektor}
h(x)=f(x-1),\quad\text{where}\quad f(y)= \sum_{S\in \mathcal{S}}y^{d-\dim S}.
\end{equation}
The \emph{greedoid polynomial} is defined as the $h$-polynomial of the dual complex.

\subsubsection{The definition of the greedoid polynomial via activities} 
There is an alternative definition for the greedoid polynomial, using activities.
For a basis $B\in\mathcal{F}$ of a greedoid, an ordering $B=\{b_1, \dots b_r\}$ is called \emph{feasible} if $\{b_1, \dots b_i\}\in \mathcal{F}$ for each $i=1, \dots r$. The axioms guarantee the existence of at least one feasible ordering for each basis.
Let us fix an ordering of the groundset $E$. Now for any basis $B$ of the greedoid, one can associate the lexicographically minimal feasible ordering.

\begin{defn}[external activity for greedoids \cite{greedoid}]
	Fix an ordering of $E$. For a basis $B$, an element $e\notin B$ is externally active in $B$ if for any $f\in B$ such that $B - f + e \in \mathcal{F}$, the lexicographically minimal feasible ordering of $B$ is lexicographically smaller than the lexicographically minimal feasible ordering of $B - f + e$. We call an element $e\notin B$ externally passive in $B$ if it is not externally active in $B$. 
	
	The external activity of a basis $B$ is the number of externally active elements in $B$, and it is denoted by $e(B)$. 
	The external passivity of a basis $B$ is the number of externally passive elements in $B$, and it is denoted by $\bar{e}(B)$. 
\end{defn}

The greedoid polynomial can also be defined as follows:
\begin{defn}[greedoid polynomial, \cite{greedoid}]
	$$\lambda(t) = \sum_{B \text{ basis}} t^{e(B)}$$
\end{defn}

Let $r$ be the commom cardinality of the bases. Then $\bar{e}(B)=r-e(B)$ for each basis $B$. Hence we can also write
$$\lambda(t) = t^{|E|-r} \sum_{B \text{ basis}} (1/t)^{\bar{e}(B)}$$

For the special case of directed branching greedoids, external activity has a more explicit description, as noted by Swee Hong Chan \cite{SweeHong_parking}. In a different context, this activity notion also appears in \cite{LiPostnikov} under the name external semi-activity, hence we use this name here. 
\begin{defn}[external semi-activity in digraphs, \cite{SweeHong_parking,LiPostnikov}]
	Let $G$ be a digraph with a fixed ordering of the edges. Let $T$ be a spanning tree in $G$. An arc $e\notin T$ is externally semi-active for $T$ if in the fundamental cycle $C(T,e)$ the maximal edge (with respect to the fixed ordering) stands in the same cyclic direction as $e$. An arc $e\notin T$ is externally semi-passive for $T$ if it is not externally semi-active.
\end{defn}
One can check that external semi-activity with respect to a spanning arborescence agrees with the greedoid activity in the case of branching greedoids \cite{SweeHong_parking}.

\subsection{Essentials of regular oriented matroids}\label{ssec:matroid}

In this section, we recall the essentials of regular oriented matroids. We try to keep the introduction to a minimum. For more background, see for example \cite{Book_Oriented_matroids}.

\begin{defn}[totally unimodular matrix]
	A matrix is said to be totally unimodular, if each of its subdeterminants is either $0$, $-1$ or $1$.
\end{defn}

A regular matroid $M$ can be represented by a totally unimodular matrix.
Let $A$ be a totally unimodular matrix, and let its columns be $a_1, \dots a_m \in \mathbb{Z}^n$. The elements of $M$ will be the column indices $\{1, \dots, m\}$. We say that a set of elements $\{{i_1} \dots, {i_j}\}$ is \emph{independent}, if the corresponding column vectors are independent over $\mathbb{Q}$. We say that a set $\{{i_1} \dots, {i_j}\}$ is a \emph{basis}, if it is a maximal independent set (equivalently, the corresponding vectors form a basis of $Im(A)$). A minimal dependent set is called a \emph{circuit}.

\begin{remark}
	Note the (well-known) fact, that for a directed graph, the vertex-edge incidence matrix is totally unimodular. If we take $A$ to be the vertex-edge incidence matrix in the above definition, we get back the graphic matroid of the digraph: elements correspond to edges, bases are spanning trees, and circuits are cycles. 
\end{remark}

A totally unimodular matrix also gives us an orientation of the matroid. This means that we can define a sign pattern on the circuits.
A signed circuit is a pair $(C^+,C^-)$ such that $C=C^+ \sqcup C^-=\{{i_1}, \dots {i_j}\}$ is a circuit, and there is a linear dependence  $\sum_{k=1}^j \lambda_k a_{i_k} = 0$ with $\lambda_k > 0$ for ${i_k}\in C^+$ and $\lambda_k < 0$ for ${i_k}\in C^-$. Note that since a circuit is a minimal dependent set, the linear dependence is uniquely determined up to multiplication with a constant. Hence for a circuit $C$, the subsets $C^+$ and $C^-$ can switch role, but the partition is uniquely determined. This means that we can define the relation of whether two elements of a circuit have the same sign or opposite signs: $k, l\in C$ have the same sign in $C$ if they are either both in $C^+$ or both in $C^-$, and they have opposite signs in $C$ if one of them is in $C^-$ and the other one is in $C^+$.
Note that for directed graphs, this definition gives back the usual definition of ``parallel or opposite orientation with respect to a cycle''.

One can define the cocircuits of the matroid in the following way: $C^*=\{{i_1} \dots, {i_j}\}$ is a cocircuit of the matroid if there is a hyperplane $H\subset \mathbb{R}^n$ such that $a_k\in H$ if $k\notin C^*$, and $a_k\notin H$ if $k\in C^*$, moreover, $C^*$ is minimal with respect to this property. One also obtains an orientation for cocircuits: If the hyperplane is defined as $\{x \mid h(x)=0\}$ for the linear functional $h$, then $i\in (C^*)^+$ if $h(a_i) > 0$ and $i\in (C^*)^-$ if $h(a_i) < 0$.. Again, $(C^*)^+$ and $(C^*)^-$ can switch roles, but it is well defined that two elements have the same sign (if $h(a_k)\cdot h(a_l) > 0$) or opposite signs (if $h(a_k)\cdot h(a_l) < 0$) with respect to a cocircuit.

For digraphs, a cut is called \emph{elementary} if it is minimal with respect to inclusion. 
Elementary cuts are exactly the cocircuits for matroids coming from digraphs. Indeed, for a cut, the functional assigning 1 to vertices on one shore and 0 to vertices on the other gives a hyperplane containing exactly the vectors corresponding to the edges not in the cut. Moreover, one can show that conversely, hyperplanes correspond to complements of cuts. Hence elementary cuts are exactly the minimal sets obtainable as the complement of a hyperplane. The relation of having the same or opposite signs with respect to an elementary cut corresponds to whether two edges point towards the same shore. 

Let us say a few words about the totally unimodular matrix $A$ representing a regular oriented matroid $M$. 
Row elimination steps do
not make a difference in the linear dependencies of the columns, 
hence one can suppose that $A$ has linearly independent rows (say, $r$ of them). Moreover, one can transform $A$ to a form $(I_r, X)$ where $I_r$ is an $r\times r$ identity matrix, and $X$ is an $r\times m-r$ totally unimodular matrix. 

Duality is a very important concept of oriented matroids, that generalizes planar duality. A signed set is a set together with a partition into positive and negative parts: $S=S^+ \sqcup S^-$. One says that two signed sets $S_1=S_1^+ \sqcup S_1^-$ and $S_2=S_2^+ \sqcup S_2^-$ are \emph{orthogonal} if either $S_1\cap S_2 = \emptyset$ or $(S_1^+ \cap S_2^+)\cup (S_1^- \cap S_2^-)$ and $(S_1^+ \cap S_2^-)\cup (S_1^- \cap S_2^+)$ are both nonempty.

It is known, that for each oriented matroid $M$ on groundset $E$, there is a unique dual oriented matroid $M^*$ on the same groundset $E$ such that the signed circuits of $M$ and the signed circuits of $M^*$ are mutually orthogonal. Moreover, $(M^*)^*=M$. 
There are also some other very nice relationships between $M$ and $M^*$ that we will explicitly use, hence let us highlight them here.

\begin{fact}\label{fact:bases_circuits_of_dual}
The signed circuits of $M^*$ are exactly the signed cocircuits of $M$. The signed cocircuits of $M^*$ are exactly the signed circuits of $M$. The bases of $M^*$ are exactly the complements of the bases of $M$.
\end{fact}

If $M$ is a regular oriented matroid, and it is represented by the totally unimodular matrix of the form $(I_r, X)$ where $I_r$ is an $r\times r$ identity matrix, and $X$ is an $r\times m-r$ totally unimodular matrix, then $M^*$ is also a regular (oriented) matroid, and it can be represented by the matrix $(-X^T, I_{m-r})$.

If $M$ is the graphic matroid of a planar digraph, then $M^*$ is the graphic matroid of its planar dual. For a nonplanar graph, $M^*$ is not a graphic matroid anymore, but it is still a regular oriented matroid.

Let us point out an important property of circuits and cocircuits that is special to regular matroids. 
By definition, for a circuit $C$ there is a linear combination $\sum_{i\in C} \lambda_i a_{i}=0$. If the matroid is regular, then there is also a linear combination in which $\lambda_i\in\{-1,1\}$ for each $i\in C$. Indeed, let $A'$ be the submatrix of $A$ consisting of columns corresponding to $C$. We can scale down the $\lambda_i$'s, hence the system of inequalities $A'x=0$, $-1\leq x_i \leq 1$ for each $i\in [m]$ has a solution. Extending a TU matrix with columns containing one $1$ or $-1$ elements and zeros yields another TU matrix. 
Hence the inequalities $A'x=0$, $-1\leq x_i \leq 1$ determine an integer polyhedron, which is not empty, and not only the zero vector. Thus, there will be some integer solutions, one of which will not be the zero vector. This solution gives us a nonzero linear combination $\sum_{i\in C} \lambda_i a_{i}=0$ with $\lambda_i\in\{-1,0,1\}$ for each $i\in C$. But as $C$ is minimally dependent, in fact $\lambda_i\in\{-1,1\}$ for each $i\in C$.

Also, for any cocircuit, $C^*$, there is a linear functional $h$ such that 
$$h(a_i) = \left\{\begin{array}{cl} 
0 & \text{if $i\notin C^*$},  \\
1 & \text{if $i \in (C^*)^+$}, \\
-1 & \text{if $i \in (C^*)^-$}.
\end{array} \right.$$
Indeed, suppose that $A=(I_r, X)$. The cocircuit $C^*$ is a circuit of the matroid $M^*$ that is represented by $(-X^T, I_{m-r})$. Hence there is a vector $\mathbf{\lambda}$ with coordinates in $\{0,1,-1\}$ such that $(-X^T, I_{m-r})\mathbf\lambda = 0$, and $\lambda_i = 1$ for the coordinates corresponding to elements of $(C^*)^+$ and $\lambda_i = -1$ for the coordinates corresponding to elements of $(C^*)^-$. Then, the first $r$ coordinates of $\mathbf\lambda$ give a vector $\mathbf\lambda'$ that defines a linear functional $h$ with the above properties.

\subsubsection{Two technical lemmas on circuits of regular oriented matroids}

\begin{claim}\label{cl:zero_comb_as_sum_of_circuits}
	If $\sum_{i=1}^m \lambda_i a_i = 0$, then there exist signed circuits $C_1, \dots C_t$ and coefficients $\nu_1, \dots, \nu_t$ such that $$\lambda_i = \sum_{j: i\in C^+_j} \nu_j - \sum_{j : i\in C^-_j} \nu_j.$$
\end{claim}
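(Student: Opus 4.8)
The plan is to prove this by induction on the size of the support $S(\lambda) := \{i : \lambda_i \neq 0\}$. If $S(\lambda) = \emptyset$ there is nothing to prove: take $t = 0$, i.e.\ the empty family of circuits. So from now on assume $\lambda \neq 0$ and fix some index $i_0 \in S(\lambda)$.

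The first step is to produce a circuit through $i_0$ whose support lies inside $S(\lambda)$. Since $\lambda_{i_0} \neq 0$, the relation $\sum_i \lambda_i a_i = 0$ can be rewritten as $a_{i_0} = -\lambda_{i_0}^{-1}\sum_{i \in S(\lambda)\setminus\{i_0\}} \lambda_i a_i$, so in the matroid $M$ the element $i_0$ lies in the closure of $S(\lambda)\setminus\{i_0\}$. By the standard fact that an element lying in the closure of a set $X$ but not in $X$ itself is contained in some circuit $C$ with $C \subseteq X \cup \{i_0\}$, we obtain a circuit $C$ with $i_0 \in C \subseteq S(\lambda)$.

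Now invoke the property of regular matroids recalled above: equip $C$ with a signed structure $(C^+, C^-)$ and let $\chi_C \in \{-1,0,1\}^m$ be defined by $(\chi_C)_i = 1$ for $i \in C^+$, $(\chi_C)_i = -1$ for $i \in C^-$, and $(\chi_C)_i = 0$ otherwise, so that $A\chi_C = 0$; after possibly swapping the roles of $C^+$ and $C^-$ we may assume $(\chi_C)_{i_0} = 1$. Set $\nu_1 := \lambda_{i_0}$ and let $\lambda' := \lambda - \nu_1\chi_C$. Then $A\lambda' = A\lambda - \nu_1 A\chi_C = 0$; the coordinate $\lambda'_{i_0} = \lambda_{i_0} - \nu_1 = 0$; and for every $i \notin S(\lambda)$ we have $\lambda_i = 0$ and $(\chi_C)_i = 0$ (because $C \subseteq S(\lambda)$), hence $\lambda'_i = 0$. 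Therefore $S(\lambda') \subseteq S(\lambda)\setminus\{i_0\}$ is strictly smaller than $S(\lambda)$.

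By the induction hypothesis applied to $\lambda'$ there are signed circuits $C_2, \dots, C_t$ and coefficients $\nu_2, \dots, \nu_t$ with $\lambda'_i = \sum_{j \geq 2:\, i \in C_j^+}\nu_j - \sum_{j \geq 2:\, i \in C_j^-}\nu_j$ for all $i$; adding back $\nu_1\chi_C$ (that is, taking $C_1 := C$ with the chosen orientation) yields the claimed expression for $\lambda$, and the recursion terminates with $t \leq m$ since $|S(\cdot)|$ decreases at each step. The only point that genuinely needs care is the first step, i.e.\ guaranteeing that the circuit we subtract off is supported inside $S(\lambda)$ — this is exactly what ensures that cancelling the single coordinate $i_0$ does not create new nonzero coordinates. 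Everything else (the $\{-1,0,1\}$ normalization of $\chi_C$, the choice of orientation, and rewriting $\lambda = \sum_j \nu_j\chi_{C_j}$ in the $C_j^{\pm}$ form of the statement) is routine bookkeeping.
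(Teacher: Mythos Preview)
Your argument is correct and follows essentially the same scheme as the paper: find a circuit inside the support, subtract a suitable multiple of its $\{0,\pm1\}$-vector to strictly shrink the support, and iterate. The only notable difference is the choice of $\nu_1$: you set $\nu_1=\lambda_{i_0}$ to kill the fixed coordinate $i_0$, whereas the paper takes $\nu_1$ to be the smallest value that zeroes out some coordinate while preserving the signs of all others --- this extra care is irrelevant for Claim~\ref{cl:zero_comb_as_sum_of_circuits} itself, but it is exactly what makes the same induction simultaneously prove Claim~\ref{cl:exist_subcircuit_with_good_orientations}.
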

\begin{claim}\label{cl:exist_subcircuit_with_good_orientations}
	If $\sum_{i=1}^m \lambda_i a_i = 0$, then there exists a signed circuit $C$ such that $C^+\subseteq \{i: \lambda_i > 0\}$ and $C^-\subseteq \{i: \lambda_i < 0\}$
\end{claim}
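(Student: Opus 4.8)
The plan is to realize the required circuit as the support of a carefully chosen vector in $\ker A$. Call $\mu\in\R^m$ \emph{$\lambda$-conformal} if, for every $i$, $\mu_i>0$ implies $\lambda_i>0$ and $\mu_i<0$ implies $\lambda_i<0$ --- equivalently, $\mathrm{supp}(\mu)\subseteq\mathrm{supp}(\lambda)$ and each nonzero $\mu_i$ has the sign of $\lambda_i$. Assuming, as we may, that not all $\lambda_i$ vanish, the set $\mathcal M$ of nonzero $\lambda$-conformal vectors in $\ker A$ is nonempty, since $\lambda\in\mathcal M$. I would then pick $\mu\in\mathcal M$ with $|\mathrm{supp}(\mu)|$ minimal and put $C:=\mathrm{supp}(\mu)$. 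Once we know that $C$ is a circuit, the linear dependence $\sum_{i\in C}\mu_i a_i=0$ exhibits it as a signed circuit with positive part $\{i\in C:\mu_i>0\}\subseteq\{i:\lambda_i>0\}$ and negative part $\{i\in C:\mu_i<0\}\subseteq\{i:\lambda_i<0\}$, which is exactly the assertion (if one additionally wants $\{-1,1\}$ coefficients on $C$, one can rescale using total unimodularity as recalled in Section~\ref{ssec:matroid}, since the sign pattern on a circuit is unique).

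So the work is to show that $C=\mathrm{supp}(\mu)$ is a circuit. Suppose not; then $C$ is dependent but not minimal, so it properly contains a circuit $C_1$, and I take $\mu_1\in\ker A$ with $\mathrm{supp}(\mu_1)=C_1$. Every coordinate of $\mu$ on $C_1\subseteq C$ is nonzero, so after possibly replacing $\mu_1$ by $-\mu_1$ I may assume $\mu$ and $\mu_1$ have the same sign in some coordinate of $C_1$; then among the ratios $\mu_i/(\mu_1)_i$, $i\in C_1$ (all nonzero), at least one is positive. Let $t^{\ast}$ be the smallest positive such ratio and set $\nu:=\mu-t^{\ast}\mu_1$. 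One checks directly that $\nu$ is still $\lambda$-conformal: on $C\setminus C_1$ nothing changes; on coordinates of $C_1$ where $\mu$ and $\mu_1$ disagree in sign, subtracting $t^{\ast}\mu_1$ only pushes $\nu_i$ further in the sign direction of $\mu_i$; and on coordinates of $C_1$ where they agree, the choice of $t^{\ast}$ as the first ratio to be reached keeps the sign of $\mu_i$ while zeroing the minimizing coordinate. Hence $|\mathrm{supp}(\nu)|<|C|$, whereas $\nu\neq 0$ because $\nu=0$ would make $\mu$ proportional to $\mu_1$, forcing $\mathrm{supp}(\mu)=C_1\subsetneq C$. This contradicts minimality of $|\mathrm{supp}(\mu)|$, so $C$ is a circuit.

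The one place that needs care --- and the main obstacle in an otherwise routine argument --- is the conformality check for $\nu$: it rests on the observation that coordinates where $\mu$ and $\mu_1$ point oppositely can never flip sign under the subtraction, while among the coordinates where they agree it is precisely the minimal-ratio choice of $t^{\ast}$ that guarantees the support strictly drops before any sign changes. The remaining ingredients --- that $\lambda$ itself is $\lambda$-conformal, that a dependent set contains a circuit, uniqueness up to scaling of a circuit's dependence, and the degenerate cases $\lambda=0$ and $\nu=0$ --- are immediate. The same ``peeling'' step, iterated with induction on $|\mathrm{supp}(\cdot)|$, is also the natural route to conformal decompositions such as Claim~\ref{cl:zero_comb_as_sum_of_circuits}.
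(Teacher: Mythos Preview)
Your proof is correct and is essentially the same ``conformal peeling'' argument the paper uses, just organized as a direct minimality/contradiction argument rather than an iteration: the paper starts from $\lambda$, repeatedly subtracts a suitable multiple of a circuit vector supported inside the current support so that at least one coefficient vanishes and no sign flips, and observes that the last surviving circuit is $\lambda$-conformal; you instead pick a $\lambda$-conformal kernel vector of minimal support and apply one peeling step to derive a contradiction. The key computation---choosing the scalar so that the support strictly drops while all nonzero coordinates keep the sign of $\lambda$---is identical in both arguments.
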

\begin{proof}[Proof of Claims \ref{cl:zero_comb_as_sum_of_circuits} and \ref{cl:exist_subcircuit_with_good_orientations}]
	As the elements $\{i : \lambda_i \neq 0\}$ are dependent, there is a subset $C_1$ of them which is a circuit. 
	By subtracting the linear combination $0=\sum_{i\in C_1^+} a_i -\sum_{i\in C_1^-} a_i$ with the appropriate coefficient $\nu_1$, one can achieve that at least one of the coefficients go to zero, while the sign of the other coefficients remain the same. (We can even achieve $\nu_1 > 0$ by taking $C_1^+=C_1^-$ and $C_1^-=C_1^+$ if necessary.) After the modification, we once again have a zero linear combination. Hence we can continue with the procedure, and it is guarateed to end in at most $|\{i: \lambda_i\neq 0\}|$ steps. At the end, we have the desired sum. Notice, that for the last circuit $C_t$, $C_t^+\subseteq \{i: \lambda_i > 0\}$ and $C_t^-\subseteq \{i: \lambda_i < 0\}$, as we always maintained that the remaining part of the linear combination has this property.
\end{proof}

\subsection{Ehrhart theory and computing $h^*$-polynomials}

Suppose that $P\subset\R^n$ is a $d$-dimensional
polytope with vertices in $\Z^n$ (lattice polytope). 
Its Ehrhart polynomial $\varepsilon_P$ is defined as follows: For $t\in \mathbb{N}$, let $\varepsilon_P(t)=|t\cdot P \cap \mathbb{Z}^n|$. This is known to be a polynomial in $t$ (which then can be extended for arbitrary reals). This polynomial is called the Ehrhart polynomial of $P$, and is also denoted by $\varepsilon_P$.

It is known that the polynomials $C_k(t)=\binom{t+d-k}{d}$ for $k=0, \dots d$ give a basis over $\mathbb{Q}$ of the space of at most degree $d$ polynomials with rational coefficients \cite[Lemma 3.8]{KP_Ehrhart}. Hence one can write the Ehrhart polynomial as

$$\varepsilon_P(t) = \sum_{k=0}^d a_k C_k(t).$$

Then the $h^*$ polynomial of $P$ is defined as $h^*(x)=\sum_{k=0}^d a_k x^k$.
(There is another common way to define the $h^*$-polynomial, namely, as the numerator of the Ehrhart series. That definition is equivalent to the one given above.)

We will prove that the greedoid polynomial of a directed branching greedoid is (a simple transformation of) the $h^*$-polynomial of a certain polytope. We will have two proofs for this fact, that correspond to two strategies to compute the $h^*$-polynomial of a polytope. Hence let us briefly summarize these two ways for computing an $h^*$-polynomial.

Both methods use dissections of the polytope into simplices. For a polytope $P$, a \emph{dissection into simplices} means a set of simplices that are interior disjoint, and their union is the whole polytope. A \emph{triangulation} is a dissection into simplices where we additionally require that the intersection of any two simplices is a common face. The advantage of triangulations is that one can think of them as simplicial complexes, but for computing $h^*$-polynomials, dissections are often just as good as triangulations.

A simplex $\Delta = \conv\{p_0, \dots p_d\}$ is said to be unimodular, if $p_0, \dots p_d \in \mathbb{Z}^n$, and one can get each integer point of the linear span of the vectors $p_1 - p_0,  \dots, p_d-p_0$ as an integer linear combination of these vectors. 

The $h$-polynomial of a triangulation is defined as the $h$-polynomial of the abstract simplicial complex.
It is well-known (see for example \cite{semibalanced}), that if a $d$-dimensional polytope $P$ is triangulated into unimodular simplices, then the $h^*$-polynomial of $P$ has the following relationship to the $h$-polynomial of the triangulation:
\begin{equation}\label{eq:h_and_h^*}
\sum_{i=0}^d h^*_i t^i =t^{d+1} h(1/t).
\end{equation}

Hence one method for computing the $h^*$-polynomial is via the $h$-polynomial of a triangulation. (We note that shellable unimodular dissections also yield the $h^*$-polynomial via the same equation, see for example \cite{semibalanced}.)

Let us recall another method, that uses visibilities from a point. This method was used in \cite{arithm_symedgepoly}, and it was generalized to dissections in \cite{fixed_edge_order}.

For a polytope $P$, a dissection into simplices is a collection of simplices $\Delta_1, \dots, \Delta_s$ such that $P=\Delta_1 \cup \dots \cup \Delta_s$, and the simplices are interior disjoint.
We say that a point is in general position with respect to the dissection 
$\Delta_1, \dots, \Delta_s$ if it is not contained in any facet-defining hyperplane of any of the simplices $\Delta_1, \dots, \Delta_s$. 

For two points ${p},{q}\in \mathbb{R}^n$, let us denote by $[p,q]$ the closed segment connecting them, and let us denote by $({p},{q})$ the relative interior of this segment.

We say that a point ${p}$ of a simplex $\Delta_i$ is \emph{visible} from $q$ if $({p},{q})$ is disjoint from $\Delta_i$. We say that a facet of $\Delta_i$ is visible from ${q}$ if all points of the facet are visible from ${q}$. 
Let $\Vis_{q}(\Delta_i)$ be the set of facets of $\Delta_i$ visible from ${q}$.

Then one has the following formula for the $h^*$-polynomial:
\begin{prop}\cite{arithm_symedgepoly,fixed_edge_order} \label{prop:h^*-vector_from_visible_facets}
Let $\Delta_1, \dots \Delta_t$ be a dissection of the $d$-dimensional lattice polytope $P$ into unimodular simplices, and let ${q}\in P$ be a point in general position with respect to the dissection. 
Then the $h^*$-polynomial $h^*(x)=h^*_d x^d +  \dots + h^*_1 x + h^*_0$ of $P$ can be expressed as $$h^*_i = |\{\Delta_j \mid |Vis_{q}(\Delta_j)| = i\}|.$$
\end{prop}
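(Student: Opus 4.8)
The plan is to use the point $q$ to build a \emph{half-open decomposition} of $P$ and then to compute the Ehrhart series piece by piece. For each $j$ let $\hat\Delta_j$ denote the set obtained from $\Delta_j$ by deleting every facet of $\Delta_j$ that is visible from $q$. The heart of the argument is to prove that $\hat\Delta_1,\dots,\hat\Delta_t$ is a partition of $P$. Given $p\in P$, consider the points $p_\varepsilon=(1-\varepsilon)p+\varepsilon q$, which lie in $P$ by convexity; since no facet-defining hyperplane of any $\Delta_j$ contains $q$, none of them contains the whole segment $[p,q]$, so for every small enough generic $\varepsilon>0$ the point $p_\varepsilon$ avoids all of these hyperplanes and therefore lies in the interior of a unique simplex $\Delta_{j^*}$. (This is where the general position of $q$ is used.) I would then check two things. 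First, $p\in\hat\Delta_{j^*}$: indeed $p=\lim p_\varepsilon\in\Delta_{j^*}$, and if $p$ lies on a facet $F$ of $\Delta_{j^*}$ then, because $p_\varepsilon\in\operatorname{int}\Delta_{j^*}$ for all small $\varepsilon>0$, the open segment $(p,q)$ meets $\Delta_{j^*}$ near $p$, so $p$ (hence $F$) is not visible from $q$. Second, $p\notin\hat\Delta_j$ for $j\neq j^*$: if $p\in\Delta_j$, then the segment from $p$ toward $q$ must leave $\Delta_j$ immediately — otherwise $p_\varepsilon$ would lie in $\operatorname{int}\Delta_j$ as well, contradicting interior-disjointness — which forces the direction $q-p$ to violate one of the facet inequalities active at $p$, i.e.\ $p$ lies on a facet of $\Delta_j$ beyond whose hyperplane $q$ lies; such a facet is visible, so $p\notin\hat\Delta_j$.

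Once $P=\bigsqcup_j\hat\Delta_j$ is established, for every $t\in\mathbb{N}$ we obtain $tP\cap\Z^n=\bigsqcup_j\bigl(t\hat\Delta_j\cap\Z^n\bigr)$, and hence the polynomial identity $\varepsilon_P(t)=\sum_j|t\hat\Delta_j\cap\Z^n|$. It remains to compute the Ehrhart series of one half-open unimodular $d$-simplex. A lattice-preserving affine map identifies $\Delta_j$ with the standard simplex $\conv\{0,e_1,\dots,e_d\}\subseteq\R^d$, and $\hat\Delta_j$ with that simplex with $i_j:=|\mathrm{Vis}_q(\Delta_j)|$ of its $d+1$ facets removed; a routine substitution shifting the coordinates constrained by the removed facets gives $|t\hat\Delta_j\cap\Z^n|=\binom{t-i_j+d}{d}$, so that $\sum_{t\ge0}|t\hat\Delta_j\cap\Z^n|\,x^t=x^{i_j}/(1-x)^{d+1}$.

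Summing over $j$ yields $\sum_{t\ge0}\varepsilon_P(t)\,x^t=\bigl(\sum_j x^{i_j}\bigr)/(1-x)^{d+1}$, and since the $h^*$-polynomial of $P$ is the numerator of the Ehrhart series we conclude $h^*_P(x)=\sum_j x^{i_j}=\sum_{i=0}^d\bigl|\{j:|\mathrm{Vis}_q(\Delta_j)|=i\}\bigr|\,x^i$, which is the assertion. (In particular $i_j\le d$ for every $j$, consistent with $\deg h^*_P\le d$: since $q\in P$ it cannot lie strictly beyond all $d+1$ facet-hyperplanes of a $d$-simplex.)

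I expect the main obstacle to be the partition claim in the first paragraph — proving at once that every point of $P$ lies in some $\hat\Delta_j$ and that no point lies in two of them. This is exactly the place where both hypotheses are needed: the dissection property (interior-disjointness together with $\bigcup_j\Delta_j=P$) is what rules out double covering by simplex interiors, and the general position of $q$ is what makes the segment from $q$ to an arbitrary point of $P$ behave generically against all the facet-defining hyperplanes simultaneously. The Ehrhart-series computation for the half-open pieces, and the final assembly, are then purely formal.
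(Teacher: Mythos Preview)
The paper does not supply its own proof of this proposition; it is quoted as a known result with citations to \cite{arithm_symedgepoly,fixed_edge_order}, so there is nothing in the paper to compare your argument against directly.

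That said, your approach is the standard one and is essentially correct. The half-open decomposition $P=\bigsqcup_j \hat\Delta_j$ obtained by pushing every point of $P$ infinitesimally toward the generic reference point $q$ is exactly the mechanism underlying the cited results, and your verification of the partition property is sound: the key points are that for all sufficiently small $\varepsilon>0$ the perturbed point $p_\varepsilon$ lies in the interior of a \emph{single} simplex $\Delta_{j^*}$ (this uses connectedness of a short initial segment of $[p,q]$ together with interior-disjointness), and that a facet of a simplex is visible from $q$ precisely when $q$ lies strictly beyond its supporting hyperplane. Your Ehrhart computation for a half-open unimodular simplex is also correct, and assembling the pieces via the Ehrhart series gives the stated formula. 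One small comment: in the uniqueness step you should make explicit that the same index $j^*$ works for \emph{all} small $\varepsilon$, not just a single generic one, since that is what is needed to conclude $p=\lim_{\varepsilon\to 0}p_\varepsilon\in\Delta_{j^*}$; you gesture at this but it deserves one more sentence.
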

\begin{remark}
Unimodularity of the simplices is a crucial property of the dissection in the above theorem.
\end{remark}

\section{Root polytopes of regular oriented matroids}

In this section, we introduce root polytopes for regular oriented matroids. 
Let $M$ be a regular oriented matroid on the ground set $\{1, \dots, m\}$, and let $A$ be a totally unimodular matrix representing $M$. Let $a_1, \dots a_m$ be the columns of $A$. 

\begin{defn}[Root polytope of a totally unimodular matrix]
Let $A$ be a totally unimodular matrix with columns $a_1, \dots a_m$. We define the root polytope of $A$ as $\mathcal{Q}_A = \conv\{a_1, \dots a_m\}$.
\end{defn}

We show that the important geometric properties of this polytope are in fact independent of the actual representing matrix $A$, hence it makes sense to talk about the root polytope of a regular oriented matroid. (Orientations will play an important role, though.)

Before showing this, let us remark an important special case: The root polytope of a directed graph (also called edge polytope) is the convex hull of the columns of the vertex-edge incidence matrix. Hence the root polytope of a directed graph is precisely the root polytope of the (oriented) graphical matroid.

\begin{prop}
If $A$ and $A'$ are two totally unimodular matrices representing the regular oriented matroid $M$, then there exists a linear bijection $\varphi:\mathcal{Q}_A \to \mathcal{Q}_{A'}$. 
Moreover, $h^*_{\mathcal{Q}_A}=h^*_{\mathcal{Q}_{A'}}$.
\end{prop}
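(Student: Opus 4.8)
The plan is to show that two totally unimodular representations of the same regular oriented matroid differ by an invertible linear map (followed by a harmless lattice-automorphism adjustment), and that such a map carries $\mathcal Q_A$ onto $\mathcal Q_{A'}$ while preserving the integer lattice on the relevant subspace, whence the Ehrhart polynomials — and so the $h^*$-polynomials — coincide.

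First I would reduce to representations with independent rows. Deleting dependent rows changes neither the linear dependences among the columns nor total unimodularity, and it does not change the affine-linear isomorphism type of $\mathcal Q_A$ (it is just the restriction to the row space, realized in a smaller $\R^n$). So assume both $A$ (size $r\times m$) and $A'$ (size $r'\times m$) have full row rank $r=r'=\operatorname{rk}(M)$. Next, a standard fact about matroid representations: since $A$ and $A'$ represent the same matroid, they have the same bases; picking a common basis $B$ and performing row operations, one may bring both to the form $(I_r \mid X)$ and $(I_r \mid X')$ (after permuting columns identically, which only relabels the vertices of the polytope). Now the key point is the \emph{oriented} hypothesis: because $M$ as an oriented matroid is fixed, the signed circuits agree, and in the normalized form $(I_r\mid X)$ the entries of $X$ are exactly read off from the fundamental circuits with their signs (column $a_{r+j}$ of the identity part expansion has coefficient $X_{ij}$ on the $i$-th basis element, with sign dictated by the signed circuit through $r+j$). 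Hence $X=X'$, i.e. once both matrices are in reduced form with respect to the same ordered basis, \emph{they are literally equal}. More usefully: the change-of-basis matrix $\varphi$ (the product of the elementary row operations taking $A$ to reduced form, composed with the inverse of the one taking $A'$ to reduced form) is a $\Z$-invertible $r\times r$ matrix (determinant $\pm1$, since both identity blocks sit over $\Z$ and the transition between two unimodular bases of the column lattice is unimodular) with $\varphi(a_i)=a'_i$ for all $i$.

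From here the geometric conclusion is routine. The map $\varphi$ restricts to a linear bijection $\mathcal Q_A=\conv\{a_i\}\to\conv\{a'_i\}=\mathcal Q_{A'}$; since $\det\varphi=\pm1$ and $\varphi$ preserves the integer lattice of the ambient space (both $A$ and $A'$ have integer entries and $\varphi\in GL_r(\Z)$), it induces a bijection $t\mathcal Q_A\cap\Z^r\to t\mathcal Q_{A'}\cap\Z^r$ for every $t\in\mathbb N$. Therefore $\varepsilon_{\mathcal Q_A}=\varepsilon_{\mathcal Q_{A'}}$ as Ehrhart polynomials, and since the $h^*$-polynomial is obtained from the Ehrhart polynomial purely by the change of basis $C_k(t)=\binom{t+d-k}{d}$ described in the preliminaries, $h^*_{\mathcal Q_A}=h^*_{\mathcal Q_{A'}}$.

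The main obstacle — and the step deserving the most care — is the claim that passing to reduced form with respect to a common basis forces the two matrices to agree up to a unimodular change of coordinates; this is exactly where regularity and the fixed \emph{orientation} are used, and one must be careful that the sign conventions in the definition of ``signed circuit'' pin down the entries of $X$ uniquely (over an unoriented field one would only get agreement up to scaling each column, which is not enough). I would handle this by an explicit computation: for a non-basis element $e=r+j$, the fundamental circuit $C(B,e)$ together with its sign pattern $(C^+,C^-)$ determines, via the unique-up-to-scalar dependence with $\pm1$ coefficients guaranteed for regular matroids, precisely which basis elements appear with coefficient $+1$ versus $-1$ against $a_e$, hence the $j$-th column of $X$; since $M$ (oriented) fixes all signed circuits, this column is the same for $A$ and $A'$. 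Once that is in place, everything else is bookkeeping.
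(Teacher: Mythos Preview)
Your argument is correct, and it takes a genuinely different route from the paper. The paper never reduces to a standard form $(I_r\mid X)$; instead it defines $\varphi$ pointwise on $\mathcal{Q}_A$ by sending a convex combination $\sum\lambda_i a_i$ to $\sum\lambda_i a'_i$, and checks well-definedness by decomposing any relation $\sum(\lambda_i-\mu_i)a_i=0$ into signed circuits (Claim~\ref{cl:zero_comb_as_sum_of_circuits}), which transfer verbatim to $A'$ because the oriented matroid is the same. The lattice-point step in the paper is the observation that, for a totally unimodular matrix, lattice points in $t\mathcal{Q}_A$ are exactly the integer combinations of the columns, and $\varphi$ manifestly preserves those. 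Your approach, by contrast, is the structural one: two totally unimodular representations of the same regular \emph{oriented} matroid in reduced form $(I_r\mid X)$ over a common basis must have identical $X$ (because each column of $X$ is read off from the $\pm1$ coefficients of a fundamental signed circuit), so the representations differ by an element of $GL_r(\mathbb{Z})$. This is arguably cleaner conceptually---it isolates the ``uniqueness of TU representation up to unimodular row operations'' as the real content---while the paper's argument is more self-contained and avoids the preliminary reduction to full row rank. One point you pass over quickly is that deleting dependent rows does not change the Ehrhart polynomial; this needs the (true, via total unimodularity of $A^T$ and Cramer's rule on a basis submatrix) fact that the deleted rows are \emph{integer} combinations of the retained ones, so that the projection to the surviving coordinates is a lattice isomorphism on the column space.
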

\begin{proof}
Let $a_1, \dots a_m$ be the columns of $A$ and $a'_1, \dots , a'_m$ be the columns of $A'$, and suppose that $a_i$ and $a'_i$ represent the same element $i$ of $M$ for each $i=1, \dots m$.

By definition, any point $p\in\mathcal{Q}_A$ can be written as $\sum_{i=1}^{m}\lambda_i a_i$ where $0\leq \lambda_i$ and $\sum_{i=1}^m \lambda_i = 1$. Then let us define $\varphi(p)=\sum_{i=1}^m \lambda_i a'_i$. 

We show that even though in general a point $p\in\mathcal{Q}_A$ can be written many ways as a convex combination of the columns, the mapping $\varphi$ is well-defined. Let $p=\sum_{i=1}^m \lambda_i a_i =\sum_{i=1}^m \mu_i a_i$. Then $\sum_{i=1}^m (\lambda_i-\mu_i) a_i = 0$. 
By Claim \ref{cl:zero_comb_as_sum_of_circuits} there are circuits $C_1, \dots C_t$ and coefficients $\nu_1, \dots, \nu_t$ such that $\lambda_i-\mu_i = \sum_{j: i\in C^+_j} \nu_j - \sum_{j : i\in C^-_j} \nu_j$. 
As $A$ and $A'$ represent the same regular oriented matroid, the signed circuits $C_1, \dots, C_t$ are also signed circuits of $A'$. This implies that $\sum_{i=1}^m (\lambda_i-\mu_i) a'_i = 0$, hence indeed, the mapping $\varphi$ is well-defined. By symmetry of the role of $A$ and $A'$, $\varphi$ is also injective and surjective, hence it is a bijection between the points of $\mathcal{Q}_A$ and $\mathcal{Q}_{A'}$.

Moreover, notice that the same proof implies that $\varphi$ is also a linear bijection between $t\cdot\mathcal{Q}_A$ and $t\cdot \mathcal{Q}_{A'}$ for any $t\in \mathbb{R}$. Also, if $t$ is an integer, then $\varphi$ takes lattice points of $t\cdot\mathcal{Q}_A$ into lattice points of $t\cdot \mathcal{Q}_{A'}$, and similarly for $\varphi^{-1}$. Indeed, since $A$ and $A'$ are TU, for both $t\cdot\mathcal{Q}_A$ and $t\cdot \mathcal{Q}_{A'}$, lattice points are exactly the integer linear combinations of the column vectors. This implies that for any integer $t$, the number of lattice points in $t\cdot \mathcal{Q}_A$ and in $t\cdot \mathcal{Q}_{A'}$ is the same. Hence also the $h^*$-polynomials are the same.
\end{proof} 

\begin{remark}
	Note that for $\varphi$ to be well-defined, we needed the \emph{signed} circuits to agree. Indeed, keeping the matroid structure, but changing the orientation of the matroid can alter 
	the $h^*$-polynomial. (This can already be seen for graphs.)
\end{remark}

\begin{defn}
	We call a regular oriented matroid \emph{Eulerian}, if $|(C^*)^+|=|(C^*)^-|$ for each cocircuit $C^*$. The name comes from the fact that for a regular oriented matroid that comes from a directed graph, being Eulerian is equivalent to the digraph being Eulerian.
	We call a regular oriented matroid \emph{co-Eulerian} if for each circuit $C$, we have $|C^+| = |C^-|$.
\end{defn} 
By Fact \ref{fact:bases_circuits_of_dual}, a regular oriented matroid $M$ is co-Eulerian if and only if its dual is Eulerian.
Let us state an important special case.
\begin{claim}\label{cl:Eulerian_dual_is_semibalanced}
	For an Eulerian digraph $G$, the (oriented) dual $M^*$ of the graphic matroid of $G$ is a co-Eulerian regular oriented matroid.
\end{claim}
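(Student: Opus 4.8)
The plan is to unwind the definition of \emph{co-Eulerian} and reduce it to a one-line degree count. By the observation just before the claim, $M^*$ is co-Eulerian if and only if $M=(M^*)^*$, the graphic matroid of $G$, is Eulerian; equivalently, since the signed circuits of $M^*$ are exactly the signed cocircuits of $M$, what must be checked is that $|(C^*)^+|=|(C^*)^-|$ for every cocircuit $C^*$ of the graphic matroid of $G$.

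First I would recall from Section~\ref{sec:prelim} that the cocircuits of the graphic matroid of $G$ are precisely the elementary cuts of $G$, and that the orientation of such a cocircuit $C^*$ is the one induced by the linear functional that takes the value $1$ on one shore $V_1$ and $0$ on the other shore $V_2$. Evaluating this functional on the column of the incidence matrix belonging to an edge $e$ of the cut shows that $e\in(C^*)^+$ or $e\in(C^*)^-$ according to which of the two directions $e$ crosses the cut; hence $|(C^*)^+|$ and $|(C^*)^-|$ are exactly the numbers of edges of the cut directed from $V_1$ to $V_2$ and from $V_2$ to $V_1$, respectively. (The same reading of signs is what makes ``Eulerian regular oriented matroid'' restrict to ``Eulerian digraph'' in the first place.)

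Then I would prove the remaining combinatorial fact: in an Eulerian digraph, every cut is crossed equally often in each direction. Writing $d^+(v)$ and $d^-(v)$ for the out- and in-degree of a vertex $v$, the Eulerian hypothesis gives $d^+(v)=d^-(v)$ for all $v$, so $\sum_{v\in V_1}d^+(v)=\sum_{v\in V_1}d^-(v)$. In the left-hand sum every edge with both ends in $V_1$ is counted once and every edge directed from $V_1$ to $V_2$ is counted once; in the right-hand sum every edge with both ends in $V_1$ is counted once and every edge directed from $V_2$ to $V_1$ is counted once. Cancelling the common contribution of the edges internal to $V_1$ yields $|\{e : e \text{ directed from } V_1 \text{ to } V_2\}| = |\{e : e \text{ directed from } V_2 \text{ to } V_1\}|$, i.e.\ $|(C^*)^+|=|(C^*)^-|$, for every cocircuit $C^*$. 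Therefore $M$ is Eulerian and $M^*$ is co-Eulerian.

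I do not expect any serious obstacle. The only place needing a bit of care is matching the oriented-matroid sign pattern on a graphic cocircuit with the graph-theoretic orientation of the edges across the cut; everything else is the elementary degree-sum computation above, which is precisely why the condition deserves the name ``Eulerian''.
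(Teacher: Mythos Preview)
Your argument is correct and follows essentially the same route as the paper: reduce co-Eulerianness of $M^*$ to Eulerianness of $M$, identify cocircuits of the graphic matroid with elementary cuts, and observe that every cut in an Eulerian digraph is balanced (you supply the degree-sum justification that the paper leaves implicit). The only thing you omit that the paper states explicitly is the one-line remark that $M^*$ is regular because duals of regular matroids are regular; you should add that to cover the full statement of the claim.
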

\begin{proof}
	Graphic matroids are regular, and duals of regular matroids are regular, hence $M^*$ is indeed regular. Moreover, the cocircuits of the graphic matroid of $G$ are the elementary cuts of $G$. Any elementary cut of an Eulerian digraph contains equal number of edges going in the two directions, hence the graphic matroid of $G$ is Eulerian. Thus, $M^*$ is co-Eulerian.
\end{proof}

\begin{prop}\label{prop:matroid_root_polytope_dimension}
Let $r$ be the rank of the regular matroid $M$, and let $A$ be a totally unimodular matrix representing $M$.

Then $\dim(\mathcal{Q}_A)= r$ if $M$ is not co-Eulerian, and $\dim(\mathcal{Q}_A)= r-1$ if $M$ is co-Eulerian.

In the latter case, a set of vectors $\{a_{i_1}, \dots a_{i_j}\}$ is affine independent if and only if it is independent in the matroid.
\end{prop}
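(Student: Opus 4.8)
The plan is to reduce the dimension count to a single linear-algebra criterion and then feed in the structure of regular (co-)Eulerian matroids through Claim \ref{cl:zero_comb_as_sum_of_circuits} and the $\{0,\pm 1\}$-dependency property of circuits.

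First I would homogenize. Let $\tilde A$ be the matrix obtained from $A$ by appending a row of all ones, so that its columns are $\tilde a_i = \binom{a_i}{1}$. A standard fact gives $\dim(\mathcal{Q}_A) = \operatorname{rank}(\tilde A) - 1$, and $\operatorname{rank}(\tilde A)$ equals $r$ if the all-ones row vector $\mathbf 1^\top$ lies in the row space of $A$ and equals $r+1$ otherwise. So it suffices to prove that $\mathbf 1^\top$ lies in the row space of $A$ if and only if $M$ is co-Eulerian. Passing to orthogonal complements inside $\R^m$ (the row space of $A$ is the orthogonal complement of $\ker A$), this is in turn equivalent to the statement: every $x\in\R^m$ with $\sum_i x_i a_i = 0$ satisfies $\sum_i x_i = 0$.

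Next I would check both directions of this equivalence. If $M$ is co-Eulerian and $\sum_i x_i a_i = 0$, then by Claim \ref{cl:zero_comb_as_sum_of_circuits} we may write $x_i = \sum_{j:\, i\in C_j^+}\nu_j - \sum_{j:\, i\in C_j^-}\nu_j$ for signed circuits $C_1,\dots,C_t$; summing over $i$ gives $\sum_i x_i = \sum_j \nu_j(|C_j^+| - |C_j^-|) = 0$, since every circuit of a co-Eulerian matroid has $|C^+|=|C^-|$. Conversely, if $M$ is not co-Eulerian, pick a circuit $C$ with $|C^+|\neq|C^-|$; the $\{0,\pm 1\}$-valued dependency supported on $C$ (available because $M$ is regular, as recalled in Section \ref{ssec:matroid}) gives a vector $x$ with $\sum_i x_i a_i = 0$ but $\sum_i x_i = |C^+|-|C^-|\neq 0$. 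This establishes the dimension claim in both cases. For the affine-independence characterization in the co-Eulerian case: a matroid-independent set is always affinely independent, since linear independence forces the only affine dependence to be trivial. For the converse, the above argument produced $c\in\R^n$ with $\langle c, a_i\rangle = 1$ for all $i$, so all the $a_i$ lie on the affine hyperplane $\{\langle c,\cdot\rangle = 1\}$ avoiding the origin; on such a hyperplane linear and affine independence coincide (applying $c$ to any linear dependence $\sum_k x_k a_{i_k}=0$ yields $\sum_k x_k = 0$). Hence a matroid-dependent subset is affinely dependent.

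The computations are all routine; the only place needing a little care is the co-Eulerian direction of the dimension argument, where one must handle an arbitrary linear dependence $\sum_i x_i a_i = 0$ rather than just a single circuit — which is exactly what Claim \ref{cl:zero_comb_as_sum_of_circuits} provides — so the genuinely regular-matroid inputs are that claim and the existence of $\{0,\pm1\}$-dependencies on circuits.
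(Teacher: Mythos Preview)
Your argument is correct. Both proofs hinge on the same fact---a circuit $C$ has $|C^+|=|C^-|$ if and only if its unique (up to scaling) linear dependence is an affine dependence---but you package it differently from the paper. The paper argues set-theoretically: $\dim(\mathcal{Q}_A)=r-1$ precisely when every linearly dependent subset of columns is affinely dependent, and since any such set contains a circuit whose dependence is unique up to scaling, this reduces immediately to the circuit condition; for the non-co-Eulerian case one extends an unbalanced circuit to $r+1$ columns with that circuit as the only dependence, giving an affinely independent set. You instead pass to the homogenized matrix and turn the question into whether $\mathbf{1}^\top$ lies in the row space of $A$, equivalently whether every kernel vector has zero coordinate sum; to verify this in the co-Eulerian case you invoke Claim~\ref{cl:zero_comb_as_sum_of_circuits} to decompose an arbitrary kernel vector into signed circuit indicators. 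Your route is slightly heavier (the paper never needs the full circuit decomposition here, only the fact that a dependent set contains a circuit), but it has the payoff of producing an explicit functional $c$ with $\langle c,a_i\rangle=1$ for all $i$, which makes the final affine-independence characterization a one-liner.
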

\begin{proof}
The dimension $\dim(\mathcal{Q}_A)$ is one less than the maximal cardinality of an affinely independent subset of the vectors $\{a_1, \dots a_r\}$. As linearly independent vectors are also affinely independent, the dimension of $\mathcal{Q}_A$ is at least $r-1$. 
Moreover, $\dim(\mathcal{Q}_A)= r-1$ if and only if each linearly dependent set of vectors $\{a_{i_1}, \dots a_{i_j}\}$ is also affinely dependent. This is equivalent to each circuit being affinely dependent. As each circuit has linear dependence $\sum_{i \in C^+} a_i - \sum_{i \in C^-} a_i = 0$, this is an affine dependence if and only if the circuit has $|C^+| = |C^-|$.
If there is any circuit $C$ with $|C^+| \neq |C^-|$, then the corresponding linear dependence is not an affine dependence. Any circuit $C$ can be extended to a set $S$ of size $r+1$ such that $C$ is the only circuit within $S$. Hence if $|C^+|\neq |C^-|$ then $S$ is an affine independent set of size $r+1$, thus, $\dim(\mathcal{Q}_A)=r$.
\end{proof}

\begin{coroll}
If $A$ is a totally unimodular matrix representing a co-Eulerian regular matroid, then convex hulls of bases give maximal dimensional simplices in $\mathcal{Q}_A$.
\end{coroll}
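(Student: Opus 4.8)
The plan is to deduce this immediately from Proposition~\ref{prop:matroid_root_polytope_dimension}. Write $r$ for the rank of $M$. Since $M$ is co-Eulerian, that proposition gives us two things at once: first, $\dim(\mathcal{Q}_A)=r-1$; and second, a set of columns $\{a_{i_1},\dots,a_{i_j}\}$ is affinely independent precisely when $\{i_1,\dots,i_j\}$ is independent in the matroid $M$.

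Now let $B$ be any basis of $M$. Then $|B|=r$ and $B$ is independent in $M$, so by the second point above the columns $\{a_i : i\in B\}$ form a collection of $r$ affinely independent points. Hence $\conv\{a_i : i\in B\}$ is a simplex of dimension $r-1$. Because each $a_i$ lies in $\mathcal{Q}_A=\conv\{a_1,\dots,a_m\}$ and $\mathcal{Q}_A$ is convex, this simplex is contained in $\mathcal{Q}_A$; and since its dimension $r-1$ equals $\dim(\mathcal{Q}_A)$, it is a maximal dimensional simplex of $\mathcal{Q}_A$, as claimed.

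I do not expect any genuine obstacle here: the corollary is a one-line consequence of the dimension formula, the real content having already been packed into Proposition~\ref{prop:matroid_root_polytope_dimension}. The only point worth making explicit is the intended reading of ``maximal dimensional simplex in $\mathcal{Q}_A$'': it means a full-dimensional simplex contained in the polytope (the notion relevant to the unimodular dissections and triangulations used in the sequel), not a face of $\mathcal{Q}_A$. Looking ahead, it is also worth noting that these simplices $\conv\{a_i : i\in B\}$ will in fact turn out to be unimodular, so that, for $M$ the directed dual of a graphic matroid, they are exactly the building blocks of the triangulation of $\mathcal{Q}_A$ by complements of arborescences mentioned in the introduction.
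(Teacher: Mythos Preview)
Your argument is correct and is exactly the intended one: the paper states this corollary immediately after Proposition~\ref{prop:matroid_root_polytope_dimension} with no separate proof, so your unpacking of the two conclusions of that proposition (dimension $r-1$ and the equivalence of affine and matroid independence) is precisely what is meant. Your clarification of ``maximal dimensional simplex in $\mathcal{Q}_A$'' as a full-dimensional simplex contained in the polytope, rather than a face, is also the correct reading in context.
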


Let $A$ be a totally unimodular matrix representing a co-Eulerian regular matroid $M$. For an independent set $S$, let us denote $\Delta_S = \conv\{a_i: i\in S\} $, which is a simplex in $\mathcal{Q}_A$, and it is a maximal dimensional simplex if and only if $S$ is a basis.
We will call a set of bases $\mathcal{D}$ of $M$ a \emph{dissecting set of bases} if the simplices $\{\Delta_B \mid B\in\mathcal{D}\}$ dissect $Q_A$. That is, if 
$\mathcal{Q}_A = \bigcup_{B\in \mathcal{D}} \Delta_B $, and the simplices in the union are interior disjoint. 

Note that even though we used a representing matrix $A$ for the definition, we claim that whether a  set of bases forms a dissecting set of bases depends only on the oriented matroid $M$. That is, we claim that if $A$ and $A'$ are totally unimodular matrices representing the oriented matroid $M$, then $\mathcal{D}$ is a dissecting set of bases for $A$ if and only if it is a dissecting set of bases for $A'$. Indeed, the bijection $\varphi$ between the points of $\mathcal{Q}_A$ and $\mathcal{Q}_A'$ has the property that $p\in\Delta_B$ if and only if $\varphi(p)\in \Delta'_B$ (where by $\Delta'_B$ we denote the simplex corresponding to $B$ in $\mathcal{Q}_{A'}$). Hence indeed the dissection property is also independent of the representation.

Note the following property, that will ensure that any such dissection is unimodular.
\begin{prop}
	If $M$ is a regular oriented matroid, then for any basis $B$, $\Delta_B$ is a unimodular simplex.
\end{prop}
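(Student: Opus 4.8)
The plan is to reduce the statement to the classical lattice-theoretic fact that a basis of a regular matroid is a $\Z$-basis of the lattice spanned by all the columns, and then to prove that fact directly with Cramer's rule.

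First I would fix notation. Write $r$ for the rank of $M$ and $B = \{i_0, i_1, \dots, i_{r-1}\}$, so that $\Delta_B = \conv\{a_{i_0}, \dots, a_{i_{r-1}}\}$, and set $u_j := a_{i_j} - a_{i_0}$ for $j = 1, \dots, r-1$. Since the columns indexed by a basis are linearly independent, they are affinely independent, so $\Delta_B$ is a genuine $(r-1)$-simplex and $W := \operatorname{span}_\R\{u_1, \dots, u_{r-1}\}$ has dimension $r-1$. Unwinding the definition of unimodularity, $\Delta_B$ is unimodular exactly when the $u_j$ generate the lattice $W \cap \Z^n$; since the $u_j$ are $\Z$-independent elements of $W \cap \Z^n$, the only thing left to prove is that they $\Z$-span it.

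The heart of the argument would be the lemma that $\{a_i : i \in B\}$ is a $\Z$-basis of $\Lambda := \Z^n \cap \mathrm{Im}(A)$. These vectors are $\Z$-independent and lie in $\Lambda$, so I only need them to $\Z$-span $\Lambda$. Given $z \in \Lambda$, I would write $z = \sum_{i\in B} s_i a_i$ with uniquely determined real $s_i$ (possible since $\{a_i : i \in B\}$ is an $\R$-basis of $\mathrm{Im}(A)$), and then pick a set $R$ of $r$ rows for which the $r\times r$ submatrix $A_{R,B}$ is invertible (possible because $A_B$ has rank $r$). Total unimodularity forces $\det A_{R,B} = \pm 1$, hence $A_{R,B}^{-1}$ has integer entries; restricting $z = \sum s_i a_i$ to the coordinates in $R$ gives $z_R = A_{R,B}\, s$, so $s = A_{R,B}^{-1} z_R \in \Z^r$. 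This proves the lemma. I expect this to be the main obstacle, and the only subtlety is that $A$ need not have linearly independent rows, which is why one passes to a suitable square submatrix $A_{R,B}$ rather than to $A_B$ itself.

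Finally I would assemble everything. Because $W \subseteq \mathrm{Im}(A)$, we have $W \cap \Z^n = W \cap \Lambda$. Given $z \in W \cap \Lambda$, the lemma gives $z = \sum_{i\in B} s_i a_i$ with all $s_i \in \Z$; and since $W = \{\sum_{i\in B} c_i a_i : \sum_{i\in B} c_i = 0\}$ (expand the definition of $W$) and coordinates relative to the basis $\{a_i : i\in B\}$ are unique, membership $z \in W$ forces $\sum_{i\in B} s_i = 0$. Hence $z = \sum_{i\in B} s_i a_i - \big(\sum_{i\in B} s_i\big) a_{i_0} = \sum_{j=1}^{r-1} s_{i_j} u_j$, an integer combination of the $u_j$. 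So the $u_j$ generate $W \cap \Z^n$, and $\Delta_B$ is unimodular. I would close with the remark that co-Eulerianness is not used for unimodularity as such; by Proposition \ref{prop:matroid_root_polytope_dimension} it is precisely what additionally ensures $\dim \Delta_B = \dim \mathcal{Q}_A = r-1$, which is why the hypothesis appears in the statement.
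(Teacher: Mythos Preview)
Your argument is correct and follows essentially the same route as the paper: both first show that the basis columns $\{a_i : i\in B\}$ are a $\Z$-basis of the ambient integer lattice (the paper assumes without loss of generality that $A$ has $r$ rows and invokes $|\det(a_{i_1},\dots,a_{i_r})|=1$, while you handle arbitrary $A$ by passing to an invertible $r\times r$ submatrix $A_{R,B}$), and then both deduce that integer points in the difference span satisfy $\sum s_i = 0$ and hence lie in the $\Z$-span of the $u_j$. Your closing remark that co-Eulerianness is not needed for unimodularity itself, only for the dimension match $\dim\Delta_B=\dim\mathcal{Q}_A$, is sharper than the paper's somewhat opaque appeal to Proposition~\ref{prop:matroid_root_polytope_dimension} at that step.
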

\begin{proof}
	Let $B=\{i_1, \dots, i_r\}$. Then $\Delta_{B}=\conv\{a_{i_1}, \dots, a_{i_r}\}$. We need to show that the integer linear combinations of $a_{i_1}-a_{i_r}, \dots, a_{i_{r-1}}-a_{i_r}$ generate each integer lattice point in their linear span. It is enough to show that any integer vector that is a linear combination of the vectors $a_{i_1}, \dots, a_{i_r}$ can be obtained as their integer linear combination. This is a well-known property of columns of a totally unimodular matrix (see for example \cite[Lemma 4.3.4]{Frank_konyv}).
\end{proof}

\section{Embedding the dual complex of an Eulerian directed branching greedoid}

In this section we show that one can embed the dual complex of an Eulerian directed branching greedoid as a triangulation of the root polytope of the dual co-Eulerian matroid. From this, it readily follows that the greedoid polynomial is equivalent to the $h^*$-polynomial of the root polytope of the dual matroid. 

\begin{thm}\label{thm:arborescenced_triangulate}
Let $G=(V,E)$ be an Eulerian digraph, and
let $A$ be any totally unimodular matrix representing the (directed) dual of $G$. Let $v_0$ be an arbitrary fixed vertex of $G$.
Let $$\mathcal{D}=\{B \subset E \mid E-B \text{ is a spanning arborescence of $G$ rooted at $v_0$}\}.$$
Then $\{ \Delta_B \mid B\in \mathcal{D}\}$ is a triangulation of $\mathcal{Q}_A$.
\end{thm}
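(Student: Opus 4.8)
### Proof proposal

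The plan is to prove that the collection $\{\Delta_B \mid B \in \mathcal{D}\}$ is (i) a \emph{dissection} of $\mathcal{Q}_A$ and (ii) in fact a \emph{triangulation}, i.e.\ that adjacent simplices meet in common faces. Since the dissection property is independent of the representing matrix $A$ (as noted after the definition of dissecting set of bases), it will suffice to work with one convenient choice of $A$. The natural choice is to recall that the dual matroid $M^*$ of the graphic matroid of $G$ has as its bases exactly the complements $E-F$ of spanning trees $F$ of $G$; in particular each $B \in \mathcal{D}$ is a basis of $M^*$, so by the corollary each $\Delta_B$ is a maximal-dimensional (unimodular) simplex in $\mathcal{Q}_A$. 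So the real content is to show these simplices tile $\mathcal{Q}_A$ without overlap and glue along faces.

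For the \textbf{dissection}, the cleanest route is a volume/degree argument combined with an interior-disjointness argument. First I would show that distinct $\Delta_B$, $\Delta_{B'}$ with $B,B'\in\mathcal{D}$ are interior disjoint: if a point $q$ lies in the relative interiors of both, then writing $q$ as a convex combination supported on $B$ and on $B'$ gives a nonzero vector $\sum_i (\lambda_i - \mu_i) a_i = 0$ supported on $B \cup B'$; applying Claim~\ref{cl:exist_subcircuit_with_good_orientations} produces a signed circuit $C$ of $M^*$ with $C^+ \subseteq \{i : \lambda_i > \mu_i\}$, $C^- \subseteq \{i : \lambda_i < \mu_i\}$. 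A signed circuit of $M^*$ is a signed cocircuit of the graphic matroid of $G$, i.e.\ (the signed version of) an elementary cut $C^*$ of $G$; since $G$ is Eulerian, $|(C^*)^+| = |(C^*)^-|$, so the circuit lies in the affine hyperplane, but then I want to derive a contradiction with both $B$ and $B'$ being arborescence-complements. Here the key observation is that a cocircuit (elementary cut) $C^*$ of $G$, viewed through the arborescence $E - B$ rooted at $v_0$, has all its edges oriented \emph{consistently} across the cut relative to the side containing $v_0$ for the unique edge of $E-B$ in $C^*$ — this is exactly the arborescence condition — and this rigidity should force $C$ to have a prescribed sign pattern incompatible with the supports $\{\lambda_i \gtrless \mu_i\}$ being nonempty on both sides for \emph{two} different arborescences. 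Then, to get that the simplices actually cover $\mathcal{Q}_A$, I would count: the number of arborescences of $G$ rooted at $v_0$ equals $\lambda_{G,v_0}(1)$, and I would check that the normalized volumes of the unimodular simplices $\Delta_B$ sum to the normalized volume of $\mathcal{Q}_A$ — equivalently, that the interior-disjoint union $\bigcup_B \Delta_B$ has the same Ehrhart leading coefficient as $\mathcal{Q}_A$. Alternatively, and perhaps more robustly, I would argue combinatorially that every point of $\mathcal{Q}_A$ lies in some $\Delta_B$: take a generic $q\in\mathcal{Q}_A$, and use a local exchange argument on bases (the basis exchange axiom for $M^*$, translated to the arborescence/fundamental-cycle language via external semi-activity) to walk from an arbitrary base-simplex containing $q$ to one indexed by a member of $\mathcal{D}$.

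For the \textbf{triangulation} (face-to-face) property, once dissection is established this is comparatively routine: any two maximal simplices $\Delta_B, \Delta_{B'}$ in a dissection of a polytope into simplices that is induced by a "flag"-like or "shelling"-like structure meet in a common face; more concretely, $\Delta_B \cap \Delta_{B'}$ is a face of each because it is cut out by the facet-defining hyperplanes, and interior-disjointness plus convexity forces the intersection to be contained in a facet of each, after which one induces downward on dimension. I would phrase this via the standard fact that a dissection of a polytope into unimodular simplices whose pairwise intersections are all faces of at least one of the two is automatically a triangulation, and verify the "at least one" clause using the circuit description again.

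The \textbf{main obstacle} I anticipate is the interior-disjointness step — specifically, extracting from "$C$ is a signed circuit of $M^*$ supported compatibly with $\{\lambda_i \gtrless \mu_i\}$ and both $E-B$, $E-B'$ are $v_0$-arborescences" a genuine contradiction. This is where the Eulerian hypothesis and the arborescence (away-from-$v_0$) orientation condition must interact, and translating the oriented-matroid circuit/cocircuit orthogonality back to the concrete statement about cuts and fundamental cycles of $G$ will require care; I expect the cleanest formulation to go through external semi-activity (the fundamental-cycle description of activity quoted in the preliminaries) so that the dissecting set $\mathcal{D}$ can be recognized as "the bases with no externally semi-active complement edge", aligning this theorem with the activity proof sketched in the introduction.
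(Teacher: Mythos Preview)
Your overall architecture (dissection $+$ face-to-face) matches the paper's, and your idea of extracting from a common interior point a signed circuit $C$ of $M^*$ with $C^+\subseteq B$, $C^-\subseteq B'$ is essentially the paper's Lemma~\ref{lem:triang_cond}, which in fact yields the stronger face-to-face conclusion directly, not merely interior-disjointness. One correction there: a $v_0$-arborescence need \emph{not} meet an elementary cut in a unique edge; the correct and simpler observation is that any $v_0$-arborescence contains at least one edge of any cut directed from the $v_0$-shore outward, so if $(C^*)^+$ denotes that direction then $F_i\cap (C^*)^+\neq\emptyset$ for $i=1,2$, which already contradicts $C^+\subseteq E\setminus F_1$ and $C^-\subseteq E\setminus F_2$. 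Note also that the face-to-face property is \emph{not} routine from dissection alone (interior-disjoint simplices can meet along half a facet), so this circuit argument is the real content of that step, not a postscript.

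The genuine gap is the covering step. The volume route is circular: you have no independent computation of the normalized volume of $\mathcal{Q}_A$, so equating it to the number of $v_0$-arborescences presupposes the theorem. The exchange route is the right idea and is what the paper does, but it needs a termination mechanism. Concretely: given $p\in\Delta_{E\setminus T}$ with $T$ a spanning tree that is not a $v_0$-arborescence, pick an edge $e\in T$ pointing toward $v_0$, take the fundamental cut $C^*(T,e)$ (a circuit of $M^*$), and shift along the corresponding affine dependence to land in $\Delta_{E\setminus T'}$ with $T'=T-e+f$ for some $f$ on the opposite side of the cut. What is missing from your sketch is why iterating this reaches an arborescence rather than cycling. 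The paper supplies a potential function: it defines a total order $\prec$ on spanning trees via a burning procedure from $v_0$ (build an edge list using a fixed edge order; compare labeled lists lexicographically), chooses $e$ as the \emph{first} bad edge in that list, and checks that each exchange produces $T'\prec T$. Your proposed description of $\mathcal{D}$ as ``the bases with no externally semi-active complement edge'' is not a correct characterization of arborescence complements and will not furnish this potential.
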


We prove Theorem \ref{thm:arborescenced_triangulate} through two lemmas.
\begin{lemma}\label{lem:arborescences_dissect}
	Using the notation of Theorem \ref{thm:arborescenced_triangulate}, $\{ \Delta_B \mid B\in \mathcal{D}\}$ is a dissection of $\mathcal{Q}_A$ into simplices.
\end{lemma}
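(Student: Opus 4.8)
\textbf{Proof plan for Lemma \ref{lem:arborescences_dissect}.}

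The plan is to establish the two defining properties of a dissection separately: that the simplices $\Delta_B$ for $B\in\mathcal{D}$ have pairwise disjoint interiors, and that their union is all of $\mathcal{Q}_A$. For the union, I would use an Ehrhart/volume counting argument rather than attempting a direct geometric covering argument. Concretely: since $M^*$ is co-Eulerian (Claim \ref{cl:Eulerian_dual_is_semibalanced}), each $\Delta_B$ is a unimodular simplex of full dimension $r=|E|-|V|+1$ in the $r$-dimensional polytope $\mathcal{Q}_A$, so each has normalized volume $1$. Hence $\sum_{B\in\mathcal{D}}\operatorname{vol}\Delta_B = |\mathcal{D}|$, which equals the number of spanning arborescences of $G$ rooted at $v_0$ — and by the matrix-tree theorem (or the Eulerian arborescence identity) this is a quantity I can compare with the normalized volume of $\mathcal{Q}_A$. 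So the real content is: (i) the $\Delta_B$ have disjoint interiors, and (ii) $\operatorname{vol}(\mathcal{Q}_A)$ equals the number of rooted arborescences. Given (i), the union $\bigcup_{B\in\mathcal{D}}\Delta_B$ is a subcomplex of volume $|\mathcal{D}|$ sitting inside $\mathcal{Q}_A$, so if $\operatorname{vol}(\mathcal{Q}_A)=|\mathcal{D}|$ the union must be everything.

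For the disjoint-interiors claim I would argue by contradiction: suppose $a\in\mathbb{R}^n$ lies in the relative interior of both $\Delta_B$ and $\Delta_{B'}$ for distinct $B,B'\in\mathcal{D}$. Writing $a$ as a strictly positive convex combination over each basis and subtracting, I get a linear dependence $\sum_i \lambda_i a_i = 0$ supported on $B\cup B'$ with the coefficients on $B\setminus B'$ strictly positive, those on $B'\setminus B$ strictly negative (after normalizing the two convex-combination sums to agree, which is legitimate because in the co-Eulerian case linear dependences among columns are automatically affine, by Proposition \ref{prop:matroid_root_polytope_dimension}). By Claim \ref{cl:exist_subcircuit_with_good_orientations} there is a signed circuit $C$ of $M^*$ with $C^+\subseteq B\setminus B'$ and $C^-\subseteq B'\setminus B$; in particular $C$ is "oriented consistently" in the sense that crossing from $B'$-side to $B$-side. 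Now $C$ is a signed cocircuit of the graphic matroid of $G$, i.e. an elementary cut $C^*(G)$, and translating the sign condition back: the edges of this cut lying in $E\setminus B$ (an arborescence rooted at $v_0$) all point one way, while those in $E\setminus B'$ (another arborescence) point the other way. I would then derive a contradiction from the arborescence property — an arborescence rooted at $v_0$ must, across any cut, have its edges oriented so that the $v_0$-side behaves correctly, and the signs forced by $C$ are incompatible with both $E\setminus B$ and $E\setminus B'$ being such arborescences simultaneously. This is the step I expect to be the main obstacle: getting the sign bookkeeping exactly right, keeping straight which orientation convention ("away from $v_0$") interacts with the cocircuit signs of $M^*$ versus the circuit signs in the dual picture, and making sure the circuit $C$ genuinely witnesses an incompatibility rather than a vacuous condition.

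For the volume computation (ii), I would relate $\operatorname{vol}(\mathcal{Q}_A)$ to a determinant/counting quantity intrinsic to $G$. Since $\mathcal{Q}_A$ is the root polytope of the dual matroid, its normalized volume can be computed from any single dissection into unimodular simplices indexed by bases of $M^*$; but circularly that is what we are proving. Instead I would invoke a known formula: the normalized volume of the root polytope of a regular matroid equals the number of bases counted with an appropriate sign/activity weight, or — more directly usable here — I would cite the planar special case from \cite{semibalanced} together with the representation-independence (the preceding Proposition) to pin down the volume, or else compute it via the Ehrhart polynomial leading coefficient using the totally unimodular structure. The cleanest route is probably: the interior-disjoint simplices $\{\Delta_B : B\in\mathcal{D}\}$ are unimodular, so their union $U$ has $\operatorname{vol}(U)=|\mathcal{D}|$; separately, one shows $\mathcal{Q}_A\setminus U$ has empty interior by observing that any point of $\mathcal{Q}_A$ in general position lies in \emph{some} $\Delta_B$ with $B\in\mathcal{D}$ — this last assertion can be proven by a direct argument choosing the arborescence greedily along a generic segment from the point to a vertex $a_e$ of $\mathcal{Q}_A$, which avoids needing the volume of $\mathcal{Q}_A$ at all. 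If that direct covering argument goes through, the volume computation becomes unnecessary and the lemma follows from the disjointness claim plus a covering claim, both handled by the same circuit/cocircuit analysis of cuts in $G$.
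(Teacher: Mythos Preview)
Your interior--disjointness argument is essentially correct and in fact reproduces the proof of Lemma~\ref{lem:arborescences_meet_in_common_face} in the paper (via the Postnikov-type criterion, Lemma~\ref{lem:triang_cond}). One small correction: after subtracting the two convex combinations, the coefficients on $B\cap B'$ can have either sign, so Claim~\ref{cl:exist_subcircuit_with_good_orientations} only gives $C^+\subseteq B$ and $C^-\subseteq B'$, not $C^+\subseteq B\setminus B'$. This is harmless: the contradiction still follows, since any arborescence rooted at $v_0$ meets every cut in at least one edge directed from the $v_0$-shore outward, and these edges of $F_1=E\setminus B$ and $F_2=E\setminus B'$ would be forced to lie in $C^-$ and $C^+$ respectively while pointing the same way. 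Note, though, that the paper proves interior-disjointness inside Lemma~\ref{lem:arborescences_dissect} itself by a different route: it exhibits an explicit separating hyperplane coming from a fundamental cycle $C(F_2,e)$ at the first place where the labeled edge-lists of the two arborescences differ.

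The genuine gap is in the covering argument. Your volume approach requires knowing in advance that the normalized volume of $\mathcal{Q}_A$ equals the number of arborescences rooted at $v_0$; you correctly identify this as circular, and neither the matrix-tree theorem nor the number-of-bases count (which gives spanning \emph{trees}, not arborescences) supplies it independently. Your fallback, ``choose the arborescence greedily along a generic segment,'' is not an argument: walking along a segment tracks which simplex of an \emph{already known} dissection you are in, but does not produce one. What is missing is a mechanism that, given an arbitrary point $p\in\mathcal{Q}_A$ lying in $\Delta_{E-T}$ for some spanning tree $T$, moves $T$ toward an arborescence while keeping $p$ inside the corresponding simplex. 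The paper supplies exactly this: it defines a total order $\prec$ on spanning trees via a burning procedure and labeled edge-lists, and shows that if $T$ is not an arborescence then there is an edge $e\in T$ pointing toward $v_0$ whose fundamental cut $C^*(T,e)$ (a circuit of $M^*$) lets one pivot to $T'=T-e+f$ with $p\in\Delta_{E-T'}$ and $T'\prec T$. Termination then forces an arborescence. This exchange-plus-well-ordering idea is the substantive content of the covering half of the lemma, and your proposal does not contain it.
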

\begin{lemma}\label{lem:arborescences_meet_in_common_face}
	Using the notation of Theorem \ref{thm:arborescenced_triangulate}, any two simplices $\Delta_{B_1},\Delta_{B_2}$, $B_1,B_2\in \mathcal{D}$ meet in a common face.
\end{lemma}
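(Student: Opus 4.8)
The plan is to reduce the statement to the combinatorial structure of arborescences via the duality between $G$ and the matroid $M^*$. Since $A$ represents $M^* = (M_G)^*$, and the bases of $M^*$ are exactly the complements of spanning trees of $G$, each $B \in \mathcal{D}$ corresponds to a spanning arborescence $A_B = E - B$ rooted at $v_0$. The face of $\Delta_B$ spanned by a subset $S \subseteq B$ corresponds, on the graph side, to $E - S \supseteq A_B$, i.e. to deleting from $G$ a set of non-arborescence edges. So the claim "$\Delta_{B_1} \cap \Delta_{B_2}$ is a common face" should translate into: the largest common face, namely $\conv\{a_i : i \in B_1 \cap B_2\}$, actually equals the set-theoretic intersection $\Delta_{B_1}\cap\Delta_{B_2}$. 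One inclusion is trivial ($\conv\{a_i : i\in B_1\cap B_2\}$ is a face of both simplices). The work is the other inclusion: any point lying in both $\Delta_{B_1}$ and $\Delta_{B_2}$ must be a convex combination supported only on $B_1 \cap B_2$.

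First I would take a point $p \in \Delta_{B_1}\cap\Delta_{B_2}$ and write $p = \sum_{i\in B_1}\lambda_i a_i = \sum_{i\in B_2}\mu_i a_i$ with nonnegative coefficients summing to $1$ in each case. Subtracting gives a vanishing signed combination $\sum_i (\lambda_i - \mu_i) a_i = 0$ where $\lambda_i \geq 0$ is supported on $B_1$ and $\mu_i \geq 0$ on $B_2$. If this combination is not identically zero, Claim \ref{cl:exist_subcircuit_with_good_orientations} produces a signed circuit $C$ of $M^*$ with $C^+ \subseteq \{i : \lambda_i - \mu_i > 0\} \subseteq B_1 \setminus B_2$ and $C^- \subseteq \{i : \lambda_i - \mu_i < 0\}\subseteq B_2\setminus B_1$. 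Now $C$ is a signed circuit of $M^* = (M_G)^*$, hence a signed \emph{cocircuit} of $M_G$, i.e. a directed-or-undirected elementary cut of $G$; the sign classes $C^+, C^-$ are the two directions of edges across this cut. The key point is then a purely graph-theoretic contradiction: I want to show no elementary cut $C^*$ of $G$ can have all its edges pointing one way contained in $A_{B_1}^{c}\cap$(edges not in $A_{B_2}$ either)$\ldots$ — more precisely, that one cannot have $C^{*+}$ disjoint from $A_{B_1}$ and $C^{*-}$ disjoint from $A_{B_2}$ simultaneously. Equivalently: $C^{*+} \cap A_{B_1} = \emptyset$ forces, since $A_{B_1}$ is a spanning arborescence, that the cut $C^*$ restricted to $A_{B_1}$ has all its $A_{B_1}$-edges in $C^{*-}$; but in an arborescence every elementary cut is crossed, and I would use the arborescence orientation condition to pin down the direction of the (unique, if we pick $C^*$ to be a fundamental cut, or at least some) $A_{B_1}$-edge crossing $C^*$ relative to $v_0$, and similarly for $A_{B_2}$, deriving that the shore containing $v_0$ would have to be on incompatible sides.

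The main obstacle I expect is exactly this last graph-theoretic step: turning "$C^+$ avoids arborescence $A_{B_1}$, $C^-$ avoids arborescence $A_{B_2}$" into a contradiction. The clean way is probably to note that if $C^{*+}$ is disjoint from $A_{B_1}$, then all edges of $A_{B_1}$ crossing the cut lie in $C^{*-}$, so they all point from (say) shore $V_2$ to shore $V_1$; since $A_{B_1}$ is an arborescence rooted at $v_0$, orienting edges away from $v_0$, this forces $v_0 \in V_2$ (the root is on the side everything points away from). Symmetrically, $C^{*-}$ disjoint from $A_{B_2}$ forces all $A_{B_2}$-edges across the cut to lie in $C^{*+}$, pointing from $V_1$ to $V_2$, which forces $v_0 \in V_1$. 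This contradiction shows the vanishing combination must be identically zero, whence $\lambda_i = \mu_i$ for all $i$ and the common support is in $B_1\cap B_2$, giving $p \in \conv\{a_i : i\in B_1\cap B_2\}$. I should double-check the edge cases where some $A_{B_j}$-edge lies \emph{in} the cut but the cut also might not separate $v_0$ from everything — but since $C^*$ is an elementary cut and $A_{B_j}$ spans, at least one $A_{B_j}$-edge crosses it, which is all the argument needs. Combined with Lemma \ref{lem:arborescences_dissect}, this yields Theorem \ref{thm:arborescenced_triangulate}.
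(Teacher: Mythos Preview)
Your proposal is correct and follows essentially the same route as the paper: the paper factors out as a separate Lemma~\ref{lem:triang_cond} the circuit criterion you derive inline (two basis simplices $\Delta_{B_1},\Delta_{B_2}$ meet in a common face iff no signed circuit of $M^*$ has $C^+\subseteq B_1$ and $C^-\subseteq B_2$), and then applies it via exactly your observation that a spanning arborescence rooted at $v_0$ must cross every elementary cut by an edge leaving the $v_0$-shore, forcing $v_0$ onto contradictory sides. One harmless slip: $\{i:\lambda_i-\mu_i>0\}\subseteq B_1$ but not necessarily $B_1\setminus B_2$; this does not matter since you only use $C^+\cap A_{B_1}=\emptyset$.
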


We start with proving Lemma \ref{lem:arborescences_dissect}. We will use as a tool an ordering on the set of all spanning trees of $G$. We define this ordering using a fixed ordering of the edges. We note that a similar ordering was introduced in \cite[Section 5]{semibalanced} using a ribbon structure instead of a fixed ordering.

Let us fix an ordering of $E$: $E=\{e_1, \dots e_m\}$. We assign an edge list $List(T)$ to each spanning tree $T$ of $G$ using a type of burning algorithm \cite{Dhar90}:
At the first moment, we say that $v_0$ is the only burning vertex, and no edge is burning.
At any moment, we take the largest edge among the non-burning edges that has at least one burning endpoint (the orientation of the edge does not matter). We add this edge to $List(T)$ and burn it. It the edge was in $T$, then we also burn its non-burning endpoint.
It is easy to see that eventually $List(T)$ includes all edges of $G$.
(It will not be important for us, but note that if $T$ is an arborescence, then the restriction of the edge list to edges of $T$ is a maximal feasible ordering of the edges of $T$.)

\begin{figure}
	\begin{tikzpicture}[scale=.20]
	\node [] (0) at (-2,10) {$s$};
	\node [circle,fill,scale=.8,draw] (1) at (0,10) {};
	\node [circle,fill,scale=.8,draw] (2) at (4,18) {};
	\node [circle,fill,scale=.8,draw] (3) at (9,14) {};
	\node [circle,fill,scale=.8,draw] (4) at (9,6) {};
	\node [circle,fill,scale=.8,draw] (5) at (4,2) {};		
	\node [circle,fill,scale=.8,draw] (6) at (16,10) {};		
	\path [thick,->,>=stealth] (1) edge [left] node {1} (2);
	\path [thick,dashed,<-,>=stealth] (1) edge [above] node {2} (3);
	\path [thick,->,>=stealth] (1) edge [below] node {3} (4);
	\path [thick,dashed,<-,>=stealth] (1) edge [left] node {4} (5);
	\path [thick,dashed,->,>=stealth] (2) edge [above] node {9} (3);
	\path [thick,<-,>=stealth] (5) edge [below] node {6} (4);
	\path [thick,->,>=stealth] (6) edge [above] node {7} (3);
	\path [thick,<-,>=stealth] (6) edge [below] node {8} (4);
	\path [thick,dashed,<-,>=stealth] (4) edge [left] node {5} (3);
	\end{tikzpicture}
	\label{fig:edge_list}
	\caption{Eulerian digraph with root $s$. The non-dashed arcs form a spanning arborescence rooted at $s$.}
\end{figure}
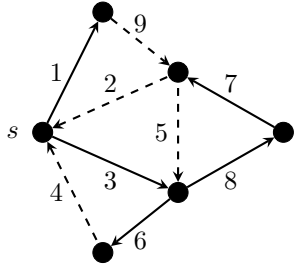
\begin{ex}
	For the rooted spanning arborescence on Figure \ref{fig:edge_list}, the edge-list is $4,3,8,7,9,6,5,2,1$.
\end{ex}

Let us define the labeled edge list of a spanning tree, where we additionally record for each edge whether it is in the tree or not, and if it is in the tree, whether it points towards $v_0$ or away from $v_0$ (within the tree).
We say that the labeled edge-lists of two spanning trees agree up to some point, if until that point, the same edges are listed in the same order, and the status of whether they are in the tree also agrees. 

Let us look at how the first difference between the labeled edge-list of two spanning trees can look like.
Let us take the labeled edge-lists of the trees $T_1$ and $T_2$ at the point before they first differ. Then at this point, the intersection of $T_i$ with the edges of the edge-list is the same for $i=1,2$, hence the set of burning vertices is the same.
Hence we have to choose the next edge of the list from the same set of edges for $T_1$ and for $T_2$ (namely, among the non-burnt edges incident to the set of burning vertices), and by the same rule (we choose the largest edge). Hence the next edge is chosen the same for the two trees. As the two lists start to differ at this point, it can only be because the next edge is in one of the trees, but not in the other.

Based on this observation, we define the following ordering of spanning trees of $G$.
\begin{defn}
	For any two directed spanning trees $T_1$ and $T_2$, we say that $T_1 \prec T_2$ if their labeled edge-lists first differ at an edge $e$ such that either 
	\begin{align}\label{eq:to-el}
	\text{$e\in T_1$, $e\notin T_2$ and $e$ is directed away from $v_0$ in $T_1$,} & \quad\text{ or} \\
	\label{eq:fej-el}
	\text{$e\in T_2$, $e\notin T_1$ and $e$ is directed towards $v_0$ in $T_2$.}&
	\end{align}
\end{defn}
Now we are ready to prove Lemma \ref{lem:arborescences_dissect}. Our proof mimics the proof of \cite[Theorem 5.8]{semibalanced} about the fact that Jaeger trees dissect the root polytope of a semi-balanced directed graph.

\begin{proof}[Proof of Lemma \ref{lem:arborescences_dissect}]
	By Claim \ref{cl:Eulerian_dual_is_semibalanced}, the dual $M^*$ of $G$ is a co-Eulerian regular matroid, hence the maximal affine independent vertex sets in $\mathcal{Q}_A$ correspond to bases of $M^*$. On the other hand, bases of $M^*$ are exactly the complements of (directed) spanning trees of $G$. Spanning arborescences are a subclass of directed spanning trees, hence $\{ \Delta_B \mid B\in \mathcal{D}\}$ are simplices of maximal dimension.
	
	We need to prove that for $B_1, B_2 \in \mathcal{D}$, the simplices $\Delta_{B_1}$ and $\Delta_{B_2}$ are interior-disjoint.
	Moreover, we need to show that for any ${p}\in \mathcal{Q}_A$, there is some $B\in \mathcal{D}$ such that ${p}\in\Delta_B$.
	
	First, we show that for any ${p}\in \mathcal{Q}_A$, there is some $B\in \mathcal{D}$ such that ${p}\in\Delta_B$. By definition, any ${p}\in \mathcal{Q}_A$ is in the convex hull of $\{a_1, \dots a_m\}$. By Caratheodory's theorem, there is a maximal affine independent subset $\{a_{i_1}, \dots a_{i_r}\}$ such that ${p}$ is in their convex hull. Then $\{{i_1}, \dots, {i_r}\}=:B$ is a basis of $M^*$, hence its complement $E-B$ is a directed spanning tree of $G$. To sum up, ${p}\in\Delta_B$ where $E-B$ is a directed spanning tree. If $E-B$ is an arborescence rooted at $v_0$, then we are ready. 
	
	We show that if
	${p}\in \Delta_B$ such that $T=E-B$ is not an arborescence rooted at $v_0$, then there is another basis $B'$ of $M^*$ such that for $T'=E-B'$ we have $T' \prec T$, and ${p}\in \Delta_{B'}$.
	
	Let $p = \sum_{i=1}^m \lambda_i a_i$, where $\lambda_i = 0$ if $i\notin B$ and $\lambda_i\geq 0$, $\sum_{i=1}^m \lambda_i = 1$.
	Take the edge list of $T$. If $T$ is not an arborescence rooted at $v_0$, then there is at least one edge in $T$ that is oriented towards $v_0$. Let $e$ be the first edge in the edge list of $T$ with this property. Take the fundamental cut $C^*(T,e)$. The cut $C^*(T,e)$ is a circuit in the dual matroid $M^*$, hence there is a linear combination $$\sum_{i\in (C^*(T,e))^+} a_i-\sum_{i\in (C^*(T,e))^-} a_i = 0.$$ By symmetry, we can suppose that $e\in C^*(T,e)^+$. 
	Take 
	$$\lambda'_i = \left\{\begin{array}{cl} 
	\lambda_i & \text{if $i\notin C^*(T,e)$},  \\
	\lambda_i +\varepsilon & \text{if $i \in (C^*(T,e))^+$}, \\
	\lambda_i -\varepsilon & \text{if $i \in (C^(T,e)*)^-$}.
	\end{array} \right.$$
	Then $p=\sum_{i=1}^m \lambda'_i a_i$. Take $\varepsilon = \min\{\lambda_i : i \in (C*(T,e))^-\}$. Then the coefficients $\lambda'_i$ are nonnegative. 
	The vectors where $\lambda'_i > 0$ are a subset of $B+e = E-T + e$, moreover, the coefficient of $e$ became positive, and the coefficient of at least one element from $(C^*(T,e))^-$ became zero. Let $f\in  (C^*(T,e))^-$ be such an element. Then since $f$ is in $C^*(T,e)$, $T'=T-e+f$ is a spanning tree. Also, by the above remark, $p\in \Delta_{B+e-f}$. 
	
	We claim that $T'\prec T$. 
	There are two cases. Either $f$ is burnt before $e$ when building up the edge list of $T$, or $e$ is burnt before $f$. If $e$ is burnt before $f$, then $e$ is the first difference in the labeled edge lists of $T$ and $T'$. Moreover, here $e$ is directed towards $v_0$ in $T$ by our assumption. Hence indeed, $T'\prec T$.
	If $f$ is burnt before $e$, then $f$ is the first difference in the labeled edge lists of $T$ and $T'$. Note that $C^*(T',f)=C^*(T,e)$. Moreover, since $e$ and $f$ stand opposite in the fundamental cut $C^*(T,e)$, $f$ is directed away from $v_0$ in $T'$. This implies $T'\prec T$ in this case, too.
	
	Next, we show that for any $B_1, B_2 \in \mathcal{D}$, $\Delta_{B_1}$ and $\Delta_{B_2}$ are interior-disjoint. For this, it is enough to show a hyperplane that separates them.
	
	Let $F_1 = E-B_1$ and $F_2 = E-B_2$ be the two spanning arborescences corresponding to $B_1$ and $B_2$. Take the first place where their labeled edge lists differ. As they are both spanning arborescences, all their edges are directed away from $v_0$. Hence the first difference needs to be that an edge $e$ is included into (say) $F_1$, $e$ is directed away from $v_0$ in $F_1$ and $e$ is not included into $F_2$. 
	
	We claim that it is enough to show that for the fundamental cycle $C(F_2, e)$, the edges of $C(F_2,e) - F_1$ all stand opposite to $e$ in the cycle $C(F_2,e)$. Indeed, the cycle $C(F_2,e)$ is a cocircuit of the dual matroid $M^*$, which geometrically is a hyperplane $H$ containing all vectors $a_i$ for $i\in E-C(F_2,e)$, moreover, the vectors $a_i$ for  $i\in (C(F_2,e))^+$ are on one side of the hyperplane, and the vectors $a_i$ for $i\in (C(F_2,e))^-$ are on the other side of the hyperplane. 
	$\Delta_{B_2}$ is the convex hull of vectors corresponding to $B_2=E-F_2$. As $e$ is the only edge in $B_2 \cap C(F_2,e)$, all the vertices of $\Delta_{B_2}$ are in $H$ except for the one corresponding to $e$. Hence to conclude that the interiors of $\Delta_{B_1}$ and $\Delta_{B_2}$ are disjoint, it is enough to show that each points corresponding to $B_1$ are either in $H$, or on the opposite side of $H$ compared to $e$. In graph language, that means that each edge of $B_2$ is either in $E-C(F_2,e)$, or if it is in $C(F_2,e)$, then it stands opposite to $e$ in the cycle $C(F_2,e)$. That is, the edges of $C(F_2,e) - F_1$ all stand opposite to $e$ in the cycle $C(F_2,e)$.
	
	Now let us prove that indeed the edges of $C(F_2,e) - F_1$ all stand opposite to $e$ in the cycle $C(F_2,e)$. The cycle $C(F_2,e)$ has the following structure: There is a vertex $u\in C(F_2,e)$ that is closest to $v_0$ within $F_2$. Let $e = \overrightarrow{v_1v_2}$. Then $C(F_2,e)$ consists of two directed paths from $u$ to $v_2$, and one of them contains $e$ as its last edge. As the edge list of $F_2$ is built up so that new edges are always incident to the subtree of $F_2$ built up so far, the edges of the path $uv_1$ are already on the list of $F_2$ before $e$ is added. As the edge lists of $F_1$ and $F_2$ agree until $e$, these edges are also in $F_1$. Hence $C(F_2,e)-F_1$ can only contain edges from the other path of $C(F_2,e)$, and those edges all stand opposite to $e$.
\end{proof}

Now we turn to proving Lemma \ref{lem:arborescences_meet_in_common_face}.
We will need a sufficient condition for two simplices within $\mathcal{Q}_A$ to meet in a proper face. The following lemma is a straightforward generalization of the sufficiency part of Postnikov's lemma \cite[Lemma 12.6]{alex}, that concerns root polytopes of bipartite graphs.

\begin{lemma}\label{lem:triang_cond}
	Let $B_1$ and $B_2$ be two bases of the co-Eulerian regular oriented matroid $M$. If there is no circuit $C=C^+\sqcup C^-$ such that 
	$C^+\subseteq B_1$ and $C^-\subseteq B_2$, then $\Delta_{B_1}$ and $\Delta_{B_2}$ meet in a common face.
\end{lemma}
\begin{proof}
	Suppose that $\Delta_{B_1}$ and $\Delta_{B_2}$ does not meet in a common face. Then there is a point $p\in \Delta_{B_1} \cap \Delta_{B_2}$ such that the minimal face $\Delta_{S_1}$ of $\Delta_{B_1}$ containing $p$ and the minimal face $\Delta_{S_2}$ of $\Delta_{B_2}$ containing $p$ does not agree. Hence we can take $p=\sum_{i\in S_1} \lambda_i a_i$ with $0< \lambda_i$, $\sum_{i\in S_1}\lambda_i=1$ and $p = \sum_{i\in S_2} \mu_i a_i$ with $0< \mu_i$, $\sum_{i\in S_2}\mu_i=1$. Subtracting the second equality from the first, we obtain a linear dependence $\sum_{i\in S_1 \cup S_2} (\lambda_i-\mu_i) a_i = 0$ (where we use the convention $\lambda_i=0$ for $i\notin S_1$ and $\mu_i=0$ for $i\notin S_2$). Clearly, $\lambda_i-\mu_i > 0$ implies $i\in S_1$ and $\lambda_i-\mu_i < 0$ implies $i\in S_2$, moreover, since $S_1\neq S_2$, we do have some nonzero coefficients. By Claim \ref{cl:exist_subcircuit_with_good_orientations} in this case there is a circuit $C$ with $C^+\subseteq \{i : \lambda_i-\mu_i > 0\}\subseteq S_1\subseteq B_1$ and $C^-\subseteq \{i : \lambda_i-\mu_i < 0\}\subseteq S_2\subseteq B_2$. 
\end{proof}
\begin{proof}[Proof of Lemma \ref{lem:arborescences_meet_in_common_face}]
	Let $F_1$ and $F_2$ be two spanning arborescences rooted at $v_0$. By Lemma \ref{lem:triang_cond}, it is enough to prove that there is no signed circuit $C$ in the dual $M^*$ such that $C^+\subseteq E-F_1$ and $C^-\subseteq E-F_2$.
	
	As circuits of the dual are cuts of $G$, we need to prove that there is no cut $C^*$ of $G$ such that $(C^*)^+\subseteq E-F_1$ and $(C^*)^-\subseteq E-F_2$.
	
	Let $C^*$ be any cut of $G$. Any spanning arborescence contains at least one edge from $C^*$ that points from the shore containing $v_0$ to the other shore. Hence for any cut $C^*$, either $(C^*)^+$ or $(C^*)^-$ intersects both $F_1$ and $F_2$. Hence we indeed cannot have a bad cut.
\end{proof}

\begin{proof}[Proof of Theorem \ref{thm:arborescenced_triangulate}]
	The Theorem follows directly from Lemmas \ref{lem:arborescences_dissect} and \ref{lem:arborescences_meet_in_common_face}.
\end{proof}
\begin{coroll}
	The triangulation of the root polytope of $M^*$ by the complements of spanning arborescences rooted at $v_0$ is a geometric embedding of the dual complex of the directed branching greedoid of $G$ rooted at $v_0$.
\end{coroll}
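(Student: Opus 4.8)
Throughout write $\mathcal D=\{B\subseteq E:E-B\text{ is a spanning arborescence of }G\text{ rooted at }v_0\}$, as in Theorem \ref{thm:arborescenced_triangulate}, and $\Delta_S=\conv\{a_i:i\in S\}$ for $S\subseteq E$. The plan is to write down an explicit isomorphism between the dual complex of the greedoid and the abstract simplicial complex carried by the triangulation $\{\Delta_B:B\in\mathcal D\}$, and to check that it matches the geometry.

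First I assemble the ingredients. By Theorem \ref{thm:arborescenced_triangulate} the cells $\Delta_B$, $B\in\mathcal D$, form a triangulation of $\mathcal Q_A$, hence a genuine geometric simplicial complex $\mathcal T$ in the ambient Euclidean space, whose faces are exactly the faces of the maximal cells $\Delta_B$. Since $G$ is connected and Eulerian it is strongly connected, so the bases of the directed branching greedoid are precisely the spanning arborescences rooted at $v_0$; hence its dual complex --- the sets $S\subseteq E$ whose complement contains a basis --- equals $\{S\subseteq E:S\subseteq B\text{ for some }B\in\mathcal D\}$. Finally, since $M^*$ is co-Eulerian (Claim \ref{cl:Eulerian_dual_is_semibalanced}), Proposition \ref{prop:matroid_root_polytope_dimension} tells us that every subset of a basis of $M^*$ is affinely independent, so for $S\subseteq B$ with $B\in\mathcal D$ the cell $\Delta_S$ is an honest $(|S|-1)$-simplex with vertex set exactly $\{a_i:i\in S\}$ and faces exactly the $\Delta_{S'}$, $S'\subseteq S$. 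Consequently the faces of $\mathcal T$ are precisely the $\Delta_S$ with $S$ a face of the dual complex, and $S\mapsto\Delta_S$ is a surjective, dimension-preserving, inclusion-monotone map from the dual complex onto the face set of $\mathcal T$.

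What remains is that $S\mapsto\Delta_S$ is injective and that $\Delta_{S'}\subseteq\Delta_S$ forces $S'\subseteq S$; both reduce to the claim that $i\mapsto a_i$ is injective on the set $E'$ of edges that are missing from at least one spanning $v_0$-arborescence (equivalently, the edges $i$ with $\{i\}$ a face of the dual complex). This is the one step that is not mere bookkeeping, and I expect it to be the main obstacle. If $i\ne j$ and $a_i=a_j$, then $\{i,j\}$ is a two-element circuit of $M^*$ --- not a loop, since a strongly connected $G$ has no bridges --- i.e.\ a two-edge bond of $G$, and since $G$ is Eulerian the two edges cross it in opposite directions, say $i$ directed from the shore $U\ni v_0$ to the other shore $W$ and $j$ directed from $W$ to $U$. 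By the fact used in the proof of Lemma \ref{lem:arborescences_meet_in_common_face}, every spanning arborescence rooted at $v_0$ contains an edge of the bond directed out of $U$; as $i$ is the only such edge, $i$ lies in every spanning $v_0$-arborescence, so $i\notin E'$. Hence at most one of $i,j$ lies in $E'$, and $i\mapsto a_i$ is injective on $E'$. Putting everything together, $S\mapsto\Delta_S$ is an inclusion-preserving bijection with inclusion-preserving inverse from the dual complex onto $\mathcal T$, carrying abstract dimensions to geometric ones, which is exactly the assertion that the triangulation is a geometric embedding of the dual complex. (Since each $\Delta_B$ is unimodular, this embedding is in fact a unimodular triangulation, which is what makes \eqref{eq:h_and_h^*} available in the proof of Theorem \ref{thm:greedoid_poly_as_h^*}.)
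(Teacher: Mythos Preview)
Your proof is correct. In the paper this corollary is stated without proof, as an immediate consequence of Theorem~\ref{thm:arborescenced_triangulate}; you have written out the natural argument in full and, in doing so, you identify and resolve a point the paper passes over in silence --- that the realization map $S\mapsto\Delta_S$ is injective on the dual complex. Your handling of this via two-element bonds is clean and correct: if $a_i=a_j$ with $i\neq j$ then $\{i,j\}$ is a two-edge cut of $G$, the Eulerian condition forces the two edges to cross in opposite directions, and the one directed out of $v_0$'s shore then lies in every spanning $v_0$-arborescence and hence is not a vertex of the dual complex.
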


Now we are ready to give our first proof for Theorem \ref{thm:greedoid_poly_as_h^*}
\begin{proof}[First proof of Theorem \ref{thm:greedoid_poly_as_h^*}]
	$\lambda_{G,v_0}$ is defined as the $h$-polynomial of the (abstract) dual complex of the greedoid. By Theorem \ref{thm:arborescenced_triangulate}, the dual complex can be realized geometrically, as a unimodular triangulation of the root polytope $\mathcal{Q}_A$ where $A$ is a totally unimodular matrix representing the (oriented) dual of $G$. By Proposition \ref{prop:matroid_root_polytope_dimension}, $\mathcal{Q}_A$ has dimension $|V|-2$.
    
    Hence by (\ref{eq:h_and_h^*}), the $h$-polynomial of the abstract simplicial complex can be written as $$\lambda_{G,v_0}\left(t\right) = t^{|V|-1} h^*_{\mathcal{Q}_A}\left(t^{-1}\right).$$
    
\end{proof}

As $\mathcal{Q}_A$ is independent of $v_0$, we obtain the following corollary, first proved by Swee Hong Chan \cite{SweeHong_parking}, using the abelian sandpile model.
\begin{coroll}
	The greedoid polynomial of the directed branching greedoid of an Eulerian digraph is independent of the root vertex.
\end{coroll}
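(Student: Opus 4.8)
The plan is to read the claim off directly from Theorem~\ref{thm:greedoid_poly_as_h^*}. Fix once and for all a totally unimodular matrix $A$ representing the directed dual of the graphic matroid of $G$, and let $\mathcal{Q}_A$ be its root polytope. The key observation is that none of the objects on the matroid/polytope side of the story — the graphic matroid of $G$, its directed dual, the representing matrix $A$, or the polytope $\mathcal{Q}_A$ — involves a distinguished vertex; the root $v_0$ enters only through the branching greedoid.

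Now let $v_0$ be an arbitrary vertex of $G$. The directed branching greedoid of $G$ rooted at $v_0$ is indeed a greedoid (Section~\ref{sec:prelim}), so $\lambda_{G,v_0}$ is defined, and Theorem~\ref{thm:greedoid_poly_as_h^*} applies with this choice of $v_0$ and with the matrix $A$ fixed above, giving
$$\lambda_{G,v_0}(t) = t^{|V|-1}\, h^*_{\mathcal{Q}_A}(t^{-1}).$$
Since the right-hand side is manifestly independent of $v_0$, so is the left-hand side; hence $\lambda_{G,v_0}(t)=\lambda_{G,v_1}(t)$ for any two vertices $v_0,v_1$ of $G$, which is exactly the assertion.

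There is essentially no obstacle remaining: all of the content has been absorbed into Theorem~\ref{thm:greedoid_poly_as_h^*} (equivalently, into Theorem~\ref{thm:arborescenced_triangulate} together with Proposition~\ref{prop:matroid_root_polytope_dimension}), whose proofs occupy the bulk of this section. The only thing one should note explicitly is that the hypothesis of Theorem~\ref{thm:greedoid_poly_as_h^*} — namely that $G$ be an Eulerian digraph — is a condition on $G$ alone and in particular does not change as we vary $v_0$. One could additionally remark that the same polytope-level description yields the companion statement $\lambda_{\overleftarrow{G},v_0}(t)=\lambda_{G,v_0}(t)$, since reversing every edge of $G$ replaces $A$ by $-A$ and hence $\mathcal{Q}_A$ by its reflection $-\mathcal{Q}_A$ through the origin, which preserves the lattice-point count of every dilate and therefore the $h^*$-polynomial; but this is not needed for the corollary as stated.
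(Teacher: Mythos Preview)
Your proof is correct and matches the paper's approach exactly: the corollary is deduced immediately from Theorem~\ref{thm:greedoid_poly_as_h^*} by observing that the right-hand side $t^{|V|-1} h^*_{\mathcal{Q}_A}(t^{-1})$ depends only on $G$ and not on $v_0$. The extra remarks you include (that the Eulerian hypothesis is on $G$ alone, and the aside about $\overleftarrow{G}$) are fine but not needed---the paper treats the $\overleftarrow{G}$ statement as a separate theorem rather than part of this corollary.
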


We can deduce another corollary.
\begin{thm}
	For an Eulerian digraph $G$, let $\overleftarrow{G}$ be the (Eulerian) digraph obtained by reversing the orientation of each edge. Let $v_0$ be an arbitrary vertex. Then $\lambda_{G,v_0} = \lambda_{\overleftarrow{G},v_0}$. That is, the greedoid polynomial of an Eulerian directed branching greedoid does not change if each edge is reversed.
\end{thm}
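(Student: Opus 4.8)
The plan is to exploit Theorem \ref{thm:greedoid_poly_as_h^*}, which expresses $\lambda_{G,v_0}(t) = t^{|V|-1} h^*_{\mathcal{Q}_A}(t^{-1})$ where $A$ is any totally unimodular matrix representing the directed dual $M^*$ of the graphic matroid of $G$. Since the $h^*$-polynomial of a lattice polytope is invariant under unimodular affine transformations (it only depends on the Ehrhart polynomial, which counts lattice points), it suffices to relate the root polytope of $\overleftarrow{G}$ to that of $G$ by such a transformation. The key observation is: reversing every edge of $G$ negates every column of the vertex-edge incidence matrix, hence negates the corresponding totally unimodular matrix representing $M^*$. Concretely, if $A$ represents the directed dual of $G$, then $-A$ represents the directed dual of $\overleftarrow{G}$, because reversing all edges of a digraph corresponds to negating all column vectors at the level of the graphic matroid, and this negation passes to the dual representation $(-X^T, I_{m-r}) \mapsto (X^T, -I_{m-r}) = -(-X^T, I_{m-r})$ (up to the usual freedom in choosing the representing matrix, which is already handled by the proposition showing the root polytope's $h^*$-polynomial is representation-independent).

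First I would argue carefully that $-A$ (or some totally unimodular matrix obtained from $A$ by negating columns) represents the directed dual of $\overleftarrow{G}$. At the level of oriented matroids, reversing all edges of $G$ is the operation that flips the sign of every element, i.e.\ swaps $C^+ \leftrightarrow C^-$ in every signed circuit and every signed cocircuit simultaneously; this is exactly the oriented matroid represented by $-A$ when $A$ represents the original. Dualizing commutes with this global sign reversal, so the directed dual of $\overleftarrow{G}$ is represented by $-A$ (possibly after permuting within the representation freedom, but by the proposition on representation-independence this does not affect $h^*$).

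Next, $\mathcal{Q}_{-A} = \conv\{-a_1, \dots, -a_m\} = -\mathcal{Q}_A$, the reflection of $\mathcal{Q}_A$ through the origin. Reflection through the origin is a unimodular linear map, so $\mathcal{Q}_{-A}$ and $\mathcal{Q}_A$ have the same Ehrhart polynomial and hence the same $h^*$-polynomial: $h^*_{\mathcal{Q}_{-A}} = h^*_{\mathcal{Q}_A}$. Applying Theorem \ref{thm:greedoid_poly_as_h^*} to $\overleftarrow{G}$ with the representing matrix $-A$ of its directed dual gives $\lambda_{\overleftarrow{G},v_0}(t) = t^{|V|-1} h^*_{\mathcal{Q}_{-A}}(t^{-1}) = t^{|V|-1} h^*_{\mathcal{Q}_A}(t^{-1}) = \lambda_{G,v_0}(t)$, as desired.

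The main obstacle is the first step: making precise that negating the incidence matrix columns of $G$ really yields a valid totally unimodular representation of the directed dual of $\overleftarrow{G}$, and that any residual ambiguity in ``the'' representing matrix is absorbed by the already-established independence of $h^*_{\mathcal{Q}_A}$ from the choice of representation. Once that bookkeeping is done, the reflection argument and the Ehrhart-invariance are routine. One should also note that $\overleftarrow{G}$ is indeed Eulerian (in-degree and out-degree are swapped at each vertex, hence still equal), so Theorem \ref{thm:greedoid_poly_as_h^*} applies to it, and that $|V|$ is unchanged.
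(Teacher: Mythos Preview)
Your proposal is correct and follows essentially the same approach as the paper: both arguments apply Theorem~\ref{thm:greedoid_poly_as_h^*} to $G$ and to $\overleftarrow{G}$, observe that $-A$ represents the directed dual of $\overleftarrow{G}$, and conclude by noting that $\mathcal{Q}_{-A}$ is the reflection of $\mathcal{Q}_A$ through the origin and hence has the same $h^*$-polynomial. Your write-up is in fact slightly more careful than the paper's, explicitly justifying why $-A$ represents the dual of $\overleftarrow{G}$ and why $\overleftarrow{G}$ is still Eulerian.
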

\begin{proof}
    Notice that the oriented matroid structures of $G$ and $\overleftarrow{G}$ agree. Hence, their dual oriented matroids also agree. Now the statement follows from Theorem \ref{thm:greedoid_poly_as_h^*}.
\end{proof}

\section{A formula for the $h^*$-polynomial of the root polytope of a co-Eulerian matroid, using activities}

In this section, we give an alternative proof for Theorem \ref{thm:greedoid_poly_as_h^*}, using the definition of the greedoid polynomial via activities. 
Apart from the root-independence of the greedoid polynomial, this proof also yields further alternative definitions for the greedoid polynomial.

We give a formula for the $h^*$-polynomial of $\mathcal{Q}_A$ for a co-Eulerian matroid. This formula is the analogue of \cite[Theorem 1.4]{fixed_edge_order} that considered the special case of root polytopes of semi-balanced digraphs (that are exactly the graphical co-Eulerian matroids).

For a basis $B$ of a matroid, upon adding an element $e\notin B$ to $B$, there is exactly one circuit that is a subset of $B\cup e$. This (signed) circuit is called the fundamental circuit of $e$ with respect to $B$ and it is denoted by $C(B,e)$.

For a basis $B$ of a matroid, upon removing an element $e\in B$ from $B$, there is exactly one cocircuit that is a subset of the complement of $B-e$. This (signed) cocircuit is called the fundamental cocircuit of $e$ with respect to $B$ and it is denoted by $C^*(B,e)$.

We will need the following activity notions, first appearing in \cite{LiPostnikov,SweeHong_parking,hyperBernardi}. Note that external semi-activity for oriented matroids is a generalization of external-semi-activity for digraphs.

\begin{defn}[internal semi-activity in oriented matroids]
	Let $M$ be an oriented matroid with a fixed ordering of the elements. Let $B$ be a basis of $M$. An element $e\in B$ is internally semi-active for $B$ if in the fundamental cocircuit $C^*(B,e)$, the maximal element (with respect to the fixed ordering) has the same sign as $e$. If the maximal element has opposite sign as $e$, then we say that $e$ is internally semi-passive for $B$.
\end{defn}
\begin{defn}[external semi-activity in oriented matroids]
	Let $M$ be an oriented matroid with a fixed ordering of the elements. Let $B$ be a basis of $M$. An element $e\notin B$ is externally semi-active for $B$ if in the fundamental circuit $C(B,e)$, the maximal element (with respect to the fixed ordering) has the same sign as $e$. If the maximal element has opposite sign as $e$, then we say that $e$ is externally semi-passive for $B$.
\end{defn}

Let us first note the following connection, that follows from Fact \ref{fact:bases_circuits_of_dual}. 

\begin{claim}\label{cl:ext_int_semipassivity_dual}
	Let $M$ be an oriented matroid on the groundset $E$ and $M^*$ its directed dual. Fix an ordering on the ground set $E$. Let $B$ be a basis of $M$. Then an element $e\in B$ is internally semi-passive in $B$ if and only if $e$ is externally semi-passive in $E-B$. \hfill\qedsymbol
\end{claim}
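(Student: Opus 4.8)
The plan is to unwind both definitions and observe that they refer literally to the same signed set. Fix the ordering on $E$, let $B$ be a basis of $M$, so $E-B$ is a basis of $M^*$, and fix $e\in B$. Then $e\notin E-B$. First I would recall that the fundamental cocircuit $C^*(B,e)$ of $e$ with respect to $B$ in $M$ is, by definition, the unique signed cocircuit of $M$ contained in $(E-B)\cup\{e\}$ (equivalently, in the complement of $B-e$). On the other side, the fundamental circuit $C(E-B,e)$ of $e$ with respect to the basis $E-B$ in $M^*$ is the unique signed circuit of $M^*$ contained in $(E-B)\cup\{e\}$. Since the signed circuits of $M^*$ are exactly the signed cocircuits of $M$ (the duality statement recalled in Section \ref{ssec:matroid}), these two objects are one and the same signed set on the same ground set with the same sign pattern; in particular they have the same maximal element, and $e$ stands parallel (resp.\ opposite) to that maximal element in one if and only if it does in the other.

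The remaining step is just to read off the definitions. By the definition of internal semi-activity, $e\in B$ is internally semi-\emph{passive} in $B$ precisely when the maximal element of $C^*(B,e)$ stands opposite to $e$. By the definition of external semi-activity, $e\notin E-B$ is externally semi-\emph{passive} in $E-B$ precisely when the maximal element of $C(E-B,e)$ stands opposite to $e$. By the previous paragraph these two conditions coincide, which is exactly the asserted equivalence.

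I expect no real obstacle here; the only point that needs a moment's care is matching the ground sets of the two fundamental (co)circuits, i.e.\ checking that ``the cocircuit inside the complement of $B-e$'' and ``the circuit of $M^*$ inside $(E-B)+e$'' describe the same support before invoking the circuit/cocircuit duality. Once that is noted, the claim is immediate, which is presumably why the paper states it with a \qedsymbol\ and no written proof.
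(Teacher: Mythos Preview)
Your argument is correct and is exactly the approach the paper has in mind: it states the claim with a \qedsymbol\ and only remarks that it ``follows from the fact that the signed circuits of $M$ are exactly the signed cocircuits of $M^*$,'' which is precisely the identification $C^*(B,e)=C(E-B,e)$ you spell out. There is nothing to add.
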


\begin{thm}\label{thm:matroid_interior_poly_fixed_edge_order}
	Let $M$ be a co-Eulerian regular oriented matroid with representing matrix $A$.
	Let us denote the $h^*$-vector of $\mathcal{Q}_A$ by $h^*_A$. Let $\mathcal{D}$ be any dissecting set of bases for $M$, and take any fixed ordering of the elements.
	Then
	$$(h^*_A)_i = |\{B\in \mathcal{D}\mid B \text{ has exactly $i$ internally semi-passive elements}\}|.$$
\end{thm}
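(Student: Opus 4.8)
**Proof plan for Theorem 5.6 (the activity formula for $h^*_A$).**

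The plan is to deduce the formula from Proposition \ref{prop:h^*-vector_from_visible_facets} by choosing a clever point $q$ in general position and then translating the combinatorial condition "$q$ sees the facet of $\Delta_B$ opposite $a_e$" into "$e$ is internally semi-passive for $B$". First I would fix the totally unimodular matrix $A=(I_r,X)$ representing $M$, and recall that since $M$ is co-Eulerian, $\mathcal{Q}_A$ has dimension $r-1$ (Proposition \ref{prop:matroid_root_polytope_dimension}) and each $\Delta_B$ is a unimodular simplex of full dimension $r-1$; hence a dissecting set of bases $\mathcal{D}$ gives a unimodular dissection to which Proposition \ref{prop:h^*-vector_from_visible_facets} applies. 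The facets of $\Delta_B$ are exactly the $\Delta_{B\setminus e}$ for $e\in B$, so what remains is: for a suitable $q$, count for each $B\in\mathcal{D}$ how many $e\in B$ give a facet $\Delta_{B\setminus e}$ visible from $q$, and show this count equals the number of internally semi-passive $e\in B$.

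The key step is the choice of $q$ and the visibility criterion. Let $e_{\max}$ be the largest element in the fixed order. I would take $q$ to be a point very close to $a_{e_{\max}}$ (the vertex of $\mathcal{Q}_A$ corresponding to the maximal element), perturbed slightly into the interior of the polytope so that it lies in general position with respect to the dissection; more precisely, one pushes $q = (1-\delta)a_{e_{\max}} + \delta\, c$ for small $\delta>0$ and $c$ a generic interior point. The crucial geometric fact I would establish is that, for a full-dimensional unimodular simplex $\Delta_B$ and the facet $\Delta_{B\setminus e}$ (opposite vertex $a_e$), visibility of that facet from $q$ is governed by which side of the affine hyperplane $\mathrm{aff}(\Delta_{B\setminus e})$ the point $q$ lies on, compared to $a_e$. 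The hyperplane spanned by $\{a_i: i\in B\setminus e\}$ is exactly the geometric incarnation of the fundamental cocircuit $C^*(B,e)$: there is a linear functional $h_{B,e}$ vanishing on the $a_i$ with $i\notin C^*(B,e)$, positive on $(C^*(B,e))^+$ and negative on $(C^*(B,e))^-$ (this is the $\{0,\pm1\}$ functional whose existence is spelled out in Section \ref{ssec:matroid}). Evaluating $h_{B,e}$ at $q\approx a_{e_{\max}}$ then detects the sign of $h_{B,e}(a_{e_{\max}})$, i.e. whether $e_{\max}$ stands parallel or opposite to $e$ in $C^*(B,e)$. The facet $\Delta_{B\setminus e}$ is visible from $q$ precisely when $q$ and $a_e$ are on opposite sides of that hyperplane, which (after checking the sign conventions, using that $h_{B,e}(a_e)$ is say $+1$) happens exactly when $e_{\max}$ stands opposite to $e$ in $C^*(B,e)$ — that is, when $e$ is internally semi-passive for $B$. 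One subtlety: if $e_{\max}\in B$ then $e_{\max}$ itself contributes, and the facet opposite $a_{e_{\max}}$ is never visible from a $q$ near $a_{e_{\max}}$; this matches the convention that the maximal element of a cocircuit stands parallel to itself, so $e_{\max}$ is internally semi-active and correctly not counted. I would also need to argue $q$ can be chosen simultaneously near $a_{e_{\max}}$ and in general position (possible since the bad set is a finite union of hyperplanes not all through $a_{e_{\max}}$), and that for $\delta$ small enough the sign of $h_{B,e}(q)$ agrees with the sign of $h_{B,e}(a_{e_{\max}})$ whenever the latter is nonzero — and it is nonzero exactly when $e_{\max}\in C^*(B,e)$, while if $e_{\max}\notin C^*(B,e)$ one needs the generic perturbation $c$ to break the tie and then a separate (easy) check that this still reads off semi-passivity correctly; in fact if $e_{\max}\notin C^*(B,e)$ then, since $e_{\max}$ is the global maximum, $e_{\max}$ cannot be the maximal element of $C^*(B,e)$ and one falls back on comparing to the next-largest element, suggesting instead an inductive/iterated choice of $q$ walking down the order, as in \cite{fixed_edge_order,arithm_symedgepoly}.

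Assembling: with this $q$, Proposition \ref{prop:h^*-vector_from_visible_facets} gives $(h^*_A)_i = |\{B\in\mathcal{D} : |\mathrm{Vis}_q(\Delta_B)| = i\}|$, and the visibility analysis identifies $|\mathrm{Vis}_q(\Delta_B)|$ with the number of internally semi-passive elements of $B$, yielding the claim. The main obstacle I anticipate is making the visibility-versus-semipassivity dictionary airtight in the case $e_{\max}\notin C^*(B,e)$: a single point near $a_{e_{\max}}$ only resolves cocircuits containing $e_{\max}$, so to handle all fundamental cocircuits one really wants the refined "lexicographic" point-selection of \cite{fixed_edge_order} (pick $q$ near $a_{e_{\max}}$, then within the relevant hyperplane near $a_{e}$ for the next-largest $e$, etc.), and then verify that the resulting iterated visibility count still equals the internally-semi-passive count cocircuit-by-cocircuit. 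This is exactly where I'd expect to have to reproduce, in the oriented-matroid language, the technical heart of the proof of \cite[Theorem 1.4]{fixed_edge_order}, using Claims \ref{cl:zero_comb_as_sum_of_circuits} and \ref{cl:exist_subcircuit_with_good_orientations} wherever the graphical argument used cycle/cut manipulations.
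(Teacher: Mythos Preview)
Your approach is essentially the paper's: apply Proposition~\ref{prop:h^*-vector_from_visible_facets}, identify the hyperplane of the facet $\Delta_{B\setminus e}$ with the $\{0,\pm1\}$ functional attached to the fundamental cocircuit $C^*(B,e)$, and read off visibility as a sign condition that matches internal semi-passivity. The one place you diverge is the construction of the point $q$, and there you make life harder than necessary. Taking $q$ close to $a_{e_{\max}}$ only resolves the facets whose fundamental cocircuit actually contains $e_{\max}$; you notice this and propose an iterated ``lexicographic'' perturbation walking down the order. The paper avoids this entirely by choosing
\[
q=\sum_{i=1}^m \frac{t^{i}}{\sum_{j=1}^m t^{j}}\, a_i
\]
for $t$ large (in fact $t=2$ suffices). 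This single convex combination encodes the whole ordering at once: for any $B$ and any $k\in B$, the sign of $h_{B,k}(q)$ is governed by the term with the largest index $i$ for which $h_{B,k}(a_i)\neq 0$, and that index is precisely the maximal element of $C^*(B,k)$. So the sign is never zero (general position comes for free) and is negative exactly when that maximal element stands opposite to $k$, i.e.\ when $k$ is internally semi-passive. No tie-breaking, no induction on the order, and no appeal to Claims~\ref{cl:zero_comb_as_sum_of_circuits} or~\ref{cl:exist_subcircuit_with_good_orientations} is needed in this argument. Your plan would work once the lexicographic refinement is carried out, but the exponential-weight point is the clean way to implement exactly what you are reaching for.
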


\begin{coroll}
Let $M$ be a co-Eulerian regular oriented matroid. For any dissecting set of bases of $M$ and any fixed order on the groundset, the distribution of internal semi-passivities is the same.
\end{coroll}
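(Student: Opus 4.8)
\emph{Proof proposal.} The substance lies entirely in Theorem~\ref{thm:matroid_interior_poly_fixed_edge_order}: granting it, the Corollary is immediate, because it asserts that for \emph{every} dissecting set of bases $\mathcal{D}$ and \emph{every} ordering the number of $B\in\mathcal{D}$ with exactly $i$ internally semi-passive elements equals $(h^*_A)_i$, and $(h^*_A)_i$ is a coefficient of the Ehrhart $h^*$-polynomial of the lattice polytope $\mathcal{Q}_A$, hence depends only on $\mathcal{Q}_A$ (in fact only on $M$), not on $\mathcal{D}$ or the order. So I describe how I would prove the theorem, following the line of \cite[Theorem~1.4]{fixed_edge_order} but in the matroid setting.

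I would apply Proposition~\ref{prop:h^*-vector_from_visible_facets} to the dissection $\{\Delta_B\mid B\in\mathcal{D}\}$ of $\mathcal{Q}_A$. This is a dissection into unimodular simplices, since $\mathcal{D}$ is a dissecting set of bases and each $\Delta_B$ is a unimodular simplex ($M$ being co-Eulerian), and $\dim\mathcal{Q}_A=r-1$ where $r$ is the rank of $M$, by Proposition~\ref{prop:matroid_root_polytope_dimension}. The geometric core is the identification of facet hyperplanes with cocircuit hyperplanes. The facets of $\Delta_B$ are $\Delta_{B-e}=\conv\{a_i:i\in B\setminus e\}$ for $e\in B$; since $C^*(B,e)\cap B=\{e\}$, the integral functional $h_{B,e}$ attached to the fundamental cocircuit $C^*(B,e)$ (so $h_{B,e}(a_i)=0$ for $i\notin C^*(B,e)$ and $h_{B,e}(a_i)=\pm1$ according to the orientation of $i$ in $C^*(B,e)$) vanishes on the $r-1$ affinely independent points $a_i$, $i\in B\setminus e$, and has $h_{B,e}(a_e)\neq0$. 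Hence the supporting hyperplane of $\Delta_{B-e}$ in $\operatorname{aff}\mathcal{Q}_A$ is $\{h_{B,e}=0\}\cap\operatorname{aff}\mathcal{Q}_A$, and $\Delta_B$ lies on its $\operatorname{sign}(h_{B,e}(a_e))$-side.

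I would then pick the ``staircase'' point: fix the ordering $1<\dots<m$ of the ground set and set $q_N=Z_N^{-1}\sum_{i=1}^m N^i a_i$ with $Z_N=\sum_{i=1}^m N^i$, a convex combination of the $a_i$, so $q_N\in\mathcal{Q}_A$. For any cocircuit $C^*$ with integral functional $h$ one has $h(q_N)=Z_N^{-1}\sum_{i\in C^*}N^i h(a_i)$; since $Z_N>0$, for $N$ large the sign of $h(q_N)$ is that of the leading term $N^{g}h(a_g)$, i.e.\ $\operatorname{sign}(h(a_g))$ with $g=\max C^*$, and in particular $h(q_N)\neq0$. There being finitely many fundamental cocircuits $C^*(B,e)$ ($B\in\mathcal{D}$, $e\in B$), one $N$ works for all, and then $q_N$ avoids every facet hyperplane of the dissection, i.e.\ it is in general position. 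Now a facet $\Delta_{B-e}$ of $\Delta_B$ is visible from $q_N$ precisely when $q_N$ lies strictly on the side of $\{h_{B,e}=0\}$ opposite to $\Delta_B$, i.e.\ when $\operatorname{sign}(h_{B,e}(q_N))=-\operatorname{sign}(h_{B,e}(a_e))$ (general position excludes the degenerate case). By the computation above this sign equals $\operatorname{sign}(h_{B,e}(a_g))$ with $g=\max C^*(B,e)$, so $\Delta_{B-e}$ is visible from $q_N$ iff $a_g$ and $a_e$ lie on opposite sides of the cocircuit hyperplane, i.e.\ iff the maximal element of $C^*(B,e)$ stands opposite to $e$ --- which is exactly the definition of $e$ being internally semi-passive for $B$. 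Thus $|\Vis_{q_N}(\Delta_B)|$ is the number of internally semi-passive elements of $B$, and Proposition~\ref{prop:h^*-vector_from_visible_facets} gives the theorem.

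The step I expect to be the main obstacle is this sign and hyperplane bookkeeping: verifying that every facet-defining hyperplane occurring in the dissection really is a fundamental-cocircuit hyperplane, keeping the orientation convention of $C^*(B,e)$ consistent with which side of it $\Delta_B$ occupies and with the visibility convention, and checking that $q_N$ is genuinely in general position with respect to the entire dissection --- the last point being exactly the leading-term computation $h_{B,e}(q_N)\neq0$. The supporting facts (unimodularity of $\Delta_B$, that affine independence coincides with matroid independence when $M$ is co-Eulerian, existence of a $\{0,\pm1\}$-valued cocircuit functional) are all already available in the preliminaries.
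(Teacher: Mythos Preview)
Your proposal is correct and follows essentially the same approach as the paper: the Corollary is stated there without proof, as an immediate consequence of Theorem~\ref{thm:matroid_interior_poly_fixed_edge_order}, and your sketch of that theorem---choosing the staircase point $q=\sum_i t^i a_i/\sum_j t^j$, identifying each facet hyperplane of $\Delta_B$ with the fundamental-cocircuit hyperplane of $C^*(B,e)$, and reading off visibility from the sign of the dominant term---matches the paper's proof essentially line for line (the paper simply fixes $t=2$ rather than taking $N$ large). Your bookkeeping on general position and on which side of the hyperplane $\Delta_B$ lies is in fact slightly more explicit than the paper's.
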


\begin{proof}[Proof of Theorem \ref{thm:matroid_interior_poly_fixed_edge_order}]
	The proof follows the proof of the special case \cite[Theorem 1.4]{fixed_edge_order} basically word-by-word.
	
	Let us fix a dissecting set of bases $\mathcal{D}$, and an ordering $<$ of the elements of $M$. By renumbering the columns of $A$, we can suppose that the elements of $M$ are ordered as $a_1 < a_2 < \dots < a_m$.
	By Proposition \ref{prop:h^*-vector_from_visible_facets}, to prove the theorem, it is enough to find a point ${q}\in Q_{A}$ in general position, such that for each simplex $\Delta_B$ of the dissection, the number of facets of $\Delta_B$ visible from $q$ is equal to the internal semi-passivity of the basis $B$.
	
	Let $q\in \mathbb{R}^n$ be the following point:
	
	$${q}= \sum_{i=1}^m \left(\frac{t^{i}}{\sum_{j=1}^m t^{j}}\right) {a}_i,$$
	where $t$ is sufficiently large. For us, $t=2$ is enough.
	
	Then ${q}$ is by definition a convex combination of points  ${a}_i$, hence by definition, $q\in \mathcal{Q}_A$.
	
	Next, we show that for any simplex $\Delta_B$, the number of facets visible from $q$ is equal to the internal semi-passivity of the basis $B$.
	
	Let $B\in \mathcal{D}$ be a basis. The facets of $\Delta_B$ are of the form $\Delta_{B-k}=\Conv\{ {a}_i : i \in B - k\}$ for elements $k\in B$.
	We show that $q$ is not in the hyperplane of the facet $\Delta_{B-k}$, furthermore, the facet $\Delta_{B-k}$ is visible from $q$ if any only if $k$ is internally semi-passive in $B$. 

    Take the fundamental cocircuit $C^*(B,k)$. By definition, there exists a linear functional $h$ such that $h(a_i)=0$ for $i\notin C^*(B,k)$, $h(a_i) = 1$ for $i\in (C^*(B,k))^+$ and $h(a_i) = -1$ for $i\in (C^*(B,k))^-$. In particular, $h(a_i)=0$ for $i\in B-k$. Suppose without loss of generality that $k\in (C^*(B,k))^+$, thus, $h(a_k)=1$.
		
	  Hence, $h(p)\geq 0$ for each vertex of $\Delta_B$, and hence $h({p})\geq 0$ for each point
	$p\in \Delta_B$. Next, we show that the facet $\Delta_{B-k}$ is visible from ${q}$ if and only if $h({q}) < 0$.

    Indeed, If $h(q) < 0$, then for any $p\in \Delta_{B-k}$, any point $y$ of $(p,q)$ has $h(y) < 0$, hence $(p,q)$ is disjoint from $\Delta_B$.
    
    If $h(q) \geq 0$, we show a point $p\in \Delta_{B-k}$ such that $(p,q)$ is not disjoint from $\Delta_B$. As $q$ is spanned by $\{a_1, \dots, a_m\}$, and $\{a_i \mid i\in B\}$ is a maximal independent set, we can write $q=\sum_{i\in B} \lambda_i a_i$. Moreover, since $h(q) > 0$, $h(a_k)=1$ and $h(a_i)=0$ for each $i\in B-k$, we have $\lambda_k > 0$.
    Hence if we take $p=\sum_{i\in B-k} \frac{1}{|B-k|} a_i$, then $(1-\varepsilon)p+\varepsilon q \in \Delta_B$ for small enough $\varepsilon$.

    With this, we have shown that $\Delta_{B-k}$ is visible from ${q}$ if and only if $h({q}) < 0$. It remains to show that $h(q) < 0$ if and only if $k$ is internally semi-passive in $B$.
    
	By linearity, we have 
	$$h(q)=\sum_{i=1}^m \left(\frac{t^{i}}{\sum_{j=1}^m t^{j}}\right) h(a_i).$$
	As $h({a}_i)=0$ for $i \notin C^*(B,k)$, these edges do not contribute to $h(q)$. The edges of $C^*(B,k)$ having the same sign as $k$ have $h(a_i) = 1$ and the edges of $C^*(B,k)$ having opposite sign as $k$ have $h({a}_i) = -1$. Hence $h(q)$
	is negative if and only if the largest edge of $C^*(B,k)$ according to $<$ has opposite sign to $k$, that is, if and only if $k$ is internally semi-passive in $B$.
\end{proof}

\begin{proof}[Second proof of Theorem \ref{thm:greedoid_poly_as_h^*}]
	By Lemma \ref{lem:arborescences_dissect}, complements of spanning arborescences give a dissecting set of bases for the dual matroid $M^*$.
	Hence by Theorem \ref{thm:matroid_interior_poly_fixed_edge_order}, 
	$(h^*_{\mathcal{Q}_A})_i$ equals the number of spanning arborescences $F$ rooted at $v_0$ such that the complement $B=E-F$ has internal semi-passivity $i$. By Claim \ref{cl:ext_int_semipassivity_dual}, if $B=E-F$ has internal semi-passivity $i$ in $M^*$ then the arborescence $F$ has external semi-passivity $i$ in $G$. Hence indeed, $h^*_{\mathcal{Q}_A}(t)=\sum_{F\in Arb(G,v_0)} t^{\bar{e}(F)}$. Using the definition of the greedoid polynomial, we get the formula in the statement of the theorem.
\end{proof}

We note that Theorems \ref{thm:matroid_interior_poly_fixed_edge_order} and \ref{thm:greedoid_poly_as_h^*} give many exotic ways to compute the greedoid polynomial $\lambda_{G,v_0}$. Indeed, based on these theorems, the greedoid polynomial can be computed as the generating function of external semi-passivities for any set of spanning trees of $G$ such that the complements of these spanning trees form a dissection of the dual root polytope.

\section*{Acknowledgement}
I would like to thank Swee Hong Chan for suggesting to address the case of $\overleftarrow{G}$, and for helpful suggestions about the introduction. I would like to thank Tamás Kálmán for inspiring discussions. Finally, I would like to thank an anonymous referee for many improving suggestions.

This work was supported by the National Research, Development and Innovation Office of Hungary -- NKFIH, grant no.\ 132488, by the János Bolyai Research Scholarship of the Hungarian Academy of Sciences, and by the ÚNKP-21-5 New National Excellence Program of the Ministry for Innovation and Technology, Hungary. This work was also partially supported by the Counting in Sparse Graphs Lendület Research Group of the Alfr\'ed Rényi Institute of Mathematics.

\bibliographystyle{plain}
\bibliography{Bernardi}

\end{document}